\newenvironment{customthm}[1]
  {\innercustomthm}
  {\endinnercustomthm}
\renewcommand\section{\@startsection{section}{1}{\z@}%
                       {-3\p@ \@plus -4\p@ \@minus -4\p@}%
                       {3\p@ \@plus 4\p@ \@minus 4\p@}%
                      {\normalfont\normalsize\centering\scshape}}
\author{Sajjad Lakzian \textsuperscript{\textdagger}}
\author{Zachary Mcguirk \textsuperscript{\ddag}}
\date{\today}
\address{\textsuperscript{\textdagger} \small Sajjad Lakzian,
Mathematics Department, Fordham University, Rose Hill Campus} 
\urladdr{\href{https://sites.google.com/site/sajjadlakzianmath/}{https://sites.google.com/site/sajjadlakzianmath/}}
\email{\href{mailto:lakzians@gmail.com}{lakzians@gmail.com}}
\address{\textsuperscript{\ddag} \small Zachary McGuirk,
	Mathematics Department, Graduate Center, CUNY} 
\email{\href{mailto:zmcguirk@gradcenter.cuny.edu}{zmcguirk@gradcenter.cuny.edu}}
\thanks{\textbf{\textit{The first author is supported by the PMC Visiting Assistant Professorship at Fordham University}}}
\subjclass[2010]{	53Cxx,	52Cxx ,	51Fxx, 	05C99}
\keywords{Graph, Curvature-Dimension, Cone, Poincar\'e Inequality, Ricci Curvature}
\newcommand{\RNum}[1]{\uppercase\expandafter{\romannumeral #1\relax}}
\def\colour{\colour}
\def\colour{\color}
\newtheorem{theorem}{Theorem}[section]
\newtheorem{corollary}[theorem]{Corollary}
\newtheorem{lemma}[theorem]{Lemma}
\newtheorem{definition}[theorem]{Definition}
\newtheorem{remark}[theorem]{Remark}
\newtheorem{thm}{Theorem}[section]
\theoremstyle{definition}
\newtheorem{defn}[thm]{Definition}
\newcommand{\be}{\begin{equation}}
\newcommand{\ee}{\end{equation}}
\newcommand{\R}{\mathbb{R}}
\renewcommand{\epsilon}{\varepsilon}
\renewcommand{\phi}{\varphi}
\newcommand{\Ric}{{\rm Ric}}			% Ricci Curvature
\DeclareFontFamily{OT1}{restrictfont}{}
\DeclareFontShape{OT1}{restrictfont}{m}{n}{<-> fmvr8x}{}
\begin{document}
	
	\title{A Global Poincar\'e inequality on Graphs via a Conical Curvature-Dimension Condition}
	\date{\today}

	\maketitle
	
	\renewcommand\abstractname{\footnotesize \textbf{ABSTRACT}}
	%\vspace*{-2em}
	\rule{\textwidth}{1px}\\
	\begin{abstract}
		We introduce and study the \emph{conical curvature-dimension condition}, $CCD(K,N)$, for graphs. We show that $CCD(K,N)$ provides necessary and sufficient conditions for the underlying graph to satisfy a sharp global Poincar\'e inequality which in turn translates to a sharp lower bound for the first eigenvalues of these graphs. Another application of the \emph{conical curvature-dimension} analysis is finding a sharp estimate on the curvature of complete graphs. 
	\end{abstract}
	\rule{\textwidth}{0.5px}
	
	%\vspace*{-0.5em}
	%\tableofcontents
	
	%\vspace*{-2em}

	\parindent0cm
	\setlength{\parskip}{\baselineskip}
	
	\setcounter{tocdepth}{1}
	\small
	\tableofcontents
	\normalsize
	\addtocontents{toc}{~\hfill\textbf{Page}\par}
	%%%%%%%%%%%%%%%%%%%%%%%%%%%%%%%%%%%%%%%%%%%%%%%
	%%%%%%%%%%%%%%%%%%%%%%%%%%%%%%%%%%%%%%%%%%%%%%%
	%%%%%%%%%%%%%%%%%%%%%%%%%%%%%%%%%%%%%%%%%%%%%%%
	%%%%%%%%%%%%%%%%%%%%%%%%%%%%%%%%%%%%%%%%%%%%%%%
	%%%%%%%%%%%%%%%%%%%%%%%%%%%%%%%%%%%%%%%%%%%%%%%
	%%%%%%%%%%%%%%%%%%%%%%%%%%%%%%%%%%%%%%%%%%%%%%%
		% SECTION: Introduction
	%%%%%%%%%%%%%%%%%%%%%%%%%%%%%%%%%%%%%%%%%%%%%%%
	%%%%%%%%%%%%%%%%%%%%%%%%%%%%%%%%%%%%%%%%%%%%%%%
	%%%%%%%%%%%%%%%%%%%%%%%%%%%%%%%%%%%%%%%%%%%%%%%
	%%%%%%%%%%%%%%%%%%%%%%%%%%%%%%%%%%%%%%%%%%%%%%%
	%%%%%%%%%%%%%%%%%%%%%%%%%%%%%%%%%%%%%%%%%%%%%%%
	%%%%%%%%%%%%%%%%%%%%%%%%%%%%%%%%%%%%%%%%%%%%%%%
\section{Introduction}
  The relation between Ricci curvature bounds and the analytic and geometric properties of a smooth Riemannian manifold is a well studied subject in geometric analysis. Thanks to the seminal work of Sturm~\cite{Stmms1}~\cite{Stmms2} and Lott-Villani~\cite{LV}, the notion of lower Ricci curvature bounds can be generalized to the setting of metric and measure spaces. 
%%%%%%%%%%%%%%%%%%%%%%%%%%%%%%%%%%%%%%%%%%%%%%%
%%%%%%%%%%%%%%%%%%%%%%%%%%%%%%%%%%%%%%%%%%%%%%%
\par A Polish metric measure space that satisfies the Lott-Sturm-Villani's $CD(K,N)$ curvature-dimension conditions is called a $CD(K,N)$ space. One important aspect of these spaces is that they support both local and global Poincar\'e inequalities (for a sharp global Poincar\'e inequality and spectral gap on $CD(K,N)$ metric measure spaces, see~\cite[Theorem 5.34]{LV2}). For metric measure spaces that satisfy certain infinitesimal regularity properties, the $CD(K,N)$ curvature-dimension bounds coincide with the Bakry-\'Emery curvature-dimension bounds (or $BE(K,N)$ for short), see~\cite{EKS}. Also, there is a close relation between the lower Ricci bound of $X$ and the lower Ricci curvature bound(s) of the cone(s) over $X$, when $X$ is a Riemannian manifold with $\Ric \ge (n-1)K$ or more generally an $RCD(K,N)$ space. In particular a Riemannian manifold, $X$, satisfies $\Ric \ge 1$ if and only if the Riemannian cone over $X$ satisfies $\Ric \ge 0 $. In the setting of $RCD(K,N)$ metric measure spaces the relation between the weak Ricci curvature bound of $X$ and that of the cone(s) over $X$ has been explored in~\cite{Ketterer}. 
%%%%%%%%%%%%%%%%%%%%%%%%%%%%%%%%%%%%%%%%%%%%%%%
%%%%%%%%%%%%%%%%%%%%%%%%%%%%%%%%%%%%%%%%%%%%%%%
\par There are some disparities between the discrete Laplacian on graphs and the Laplacian on manifolds (or on some more general non-smooth continuous metric measure spaces). Despite these disparities, studying Bakry-\'Emery type curvature-dimension conditions for the discrete Laplacian has proven fruitful in the sense that in the discrete setting graphs with lower Ricci curvature bounds satisfy some properties that are similar to the ones satisfied by manifolds with lower Ricci curvature bounds, see~\cite{LY},~\cite{LLY}, ~\cite{CLY}  and ~\cite{KGPP}.
%%%%%%%%%%%%%%%%%%%%%%%%%%%%%%%%%%%%%%%%%%%%%%%
%%%%%%%%%%%%%%%%%%%%%%%%%%%%%%%%%%%%%%%%%%%%%%%
\par In this paper we acquire partial results relating the curvature of a graph to the curvature of the cone over over its vertices. In general our paper does not admit a clean cut relation between the lower Bakry-\'Emery Ricci curvature bound of the base graph and that of the cone over the graph. This is mainly due to the fact that in the discrete setting the distance between any two vertices in a cone is at most two and thus the operator $\Gamma_2$ at any point $x$ (a key ingredient in the definition of curvature-dimension bounds) will depend on the entire graph. So the curvature bound at the cone point over the vertex set of a graph will store the curvature information of the entire graph, see~\ref{thm:main-1}. 
%%%%%%%%%%%%%%%%%%%%%%%%%%%%%%%%%%%%%%%%%%%%%%%
%%%%%%%%%%%%%%%%%%%%%%%%%%%%%%%%%%%%%%%%%%%%%%%
\par This article is primarily concerned with the properties of the underlying graph $G$ that can be extracted when the cone over $G$ satisfies the $CD(K,N)$ curvature-dimension conditions at the cone point (a property which will be called the conical curvature-dimension, or $CCD(K,N)$ condition). Our main results are a global Poincar\'e inequality and the spectral gap estimates that follow. 
%%%%%%%%%%%%%%%%%%%%%%%%%%%%%%%%%%%%%%%%%%%%%%%
%%%%%%%%%%%%%%%%%%%%%%%%%%%%%%%%%%%%%%%%%%%%%%%
\begin{defn}[$CCD(K,N)$ Curvature-Dimension Conditions]\label{defn:CCD}
Let $G=(V,E)$ be a finite, connected, undirected, loop-edge free graph and consider the cone over the vertex set of $G$. $G$ is said to satisfy the  \emph{conical curvature-dimension condition}, $CCD(K,N)$ for $K \in \mathbb{R}$ and $N \in (1,\infty]$, if the cone over $G$ satisfies the $CD(K,N)$ curvature-dimension conditions at the vertex $p$, namely if
%%%%%%%%%%%%%%%%%%%%%%%%%%%%%%%%%%%%%%%%%%%%%%%
\small
\be
\Gamma^c_2(f)(p)\geq\frac{\bigl(\Delta^c f \bigr)^2(p)}{N}+K\Gamma^c_1(f)(p), \label{eq:ccd}
\ee
\normalsize
%%%%%%%%%%%%%%%%%%%%%%%%%%%%%%%%%%%%%%%%%%%%%%%
holds for any function $f$ defined on the cone and $\Delta^c$, $\Gamma_1^c$ and $\Gamma^c_2$ are the usual $\Delta$, $\Gamma_1$ and $\Gamma_2$ operators (see (\ref{lap}), (\ref{gamma1}) and (\ref{gamma2})) except on the cone $C(G)$ over $G$. We note that the second term in (\ref{eq:ccd}) is understood to be zero when $N=\infty$.
\end{defn}
%%%%%%%%%%%%%%%%%%%%%%%%%%%%%%%%%%%%%%%%%%%%%%%
%%%%%%%%%%%%%%%%%%%%%%%%%%%%%%%%%%%%%%%%%%%%%%%
Now we can state our main theorems and corollaries: 
%%%%%%%%%%%%%%%%%%%%%%%%%%%%%%%%%%%%%%%%%%%%%%%
\begin{theorem}[$CCD(K,N)$ implies global Poincar\'e Inequality]\label{thm:main-1}
	
If a graph, $G$, satisfies $CCD(K,N)$ curvature-dimension condition, then for any function $f$ on $G$ one has
%%%%%%%%%%%%%%%%%%%%%%%%%%%%%%%%%%%%%%%%%%%%%%%	
	\small
	\be
	\sum_{y\in V}\Gamma_1(f)(y)\geq\frac{2-N}{2N}\biggl ( \sum_{y\in V}f(y)\biggr )^2+\frac{2K+\lvert V\rvert-3}{4}\sum_{y\in V}f^2(y). \notag
	\ee
\normalsize	
For functions $f$ with $avg(f)=0$, this reduces to the following global Poincar\'e inequality,
%%%%%%%%%%%%%%%%%%%%%%%%%%%%%%%%%%%%%%%%%%%%%%%
	\small
	\be
	\lVert f\rVert_2\leq\sqrt{\frac{2}{2K+\lvert V\rvert-3}}\lVert\nabla f\rVert_2, \notag
	\ee
	\normalsize
    where $\lVert\nabla f\rVert_2$ is understood in the graph setting to be $2\cdot\sum_{y\in V}\Gamma_1(f)(y)$.
\end{theorem}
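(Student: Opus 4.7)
The plan is to apply the $CCD(K,N)$ hypothesis to a carefully chosen extension of $f$ to the cone $C(G)$, then express every cone-operator appearing in (\ref{eq:ccd}) at the apex $p$ as an explicit sum of graph quantities on $V$.

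Concretely, I would extend $f\colon V\to\mathbb{R}$ to $\tilde f\colon C(G)\to\mathbb{R}$ by declaring $\tilde f(p)=0$. Because $p$ is adjacent to every $y\in V$ in the cone, the right hand side of (\ref{eq:ccd}) becomes transparent:
\[
\Delta^c\tilde f(p)=\sum_{y\in V}f(y),\qquad \Gamma^c_1(\tilde f)(p)=\tfrac{1}{2}\sum_{y\in V}f(y)^2,
\]
so $\tfrac{1}{N}\bigl(\Delta^c\tilde f(p)\bigr)^2+K\Gamma^c_1(\tilde f)(p)$ is already in the shape that appears in the statement.

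The real work is the left hand side $\Gamma^c_2(\tilde f)(p)=\tfrac{1}{2}\Delta^c\Gamma^c_1(\tilde f)(p)-\Gamma^c_1(\tilde f,\Delta^c\tilde f)(p)$. For any $y\in V$ the cone-neighbors of $y$ are its $G$-neighbors together with $p$, which gives
\[
\Gamma^c_1(\tilde f)(y)=\Gamma_1(f)(y)+\tfrac{1}{2}f(y)^2,\qquad \Delta^c\tilde f(y)=\Delta f(y)-f(y).
\]
I would then substitute these into the defining formulas $\Delta^c\Gamma^c_1(\tilde f)(p)=\sum_y\Gamma^c_1(\tilde f)(y)-\lvert V\rvert\,\Gamma^c_1(\tilde f)(p)$ and $\Gamma^c_1(\tilde f,\Delta^c\tilde f)(p)=\tfrac{1}{2}\sum_y(\Delta^c\tilde f(y)-\Delta^c\tilde f(p))f(y)$, and invoke the standard graph integration-by-parts identity $\sum_y f(y)\Delta f(y)=-\sum_y\Gamma_1(f)(y)$ to collapse the cross term. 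I expect everything to combine into
\[
\Gamma^c_2(\tilde f)(p)=\sum_{y\in V}\Gamma_1(f)(y)+\frac{3-\lvert V\rvert}{4}\sum_{y\in V}f(y)^2+\frac{1}{2}\Bigl(\sum_{y\in V}f(y)\Bigr)^2.
\]

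Plugging this identity together with the right hand side expressions above into (\ref{eq:ccd}) and solving for $\sum_y\Gamma_1(f)(y)$ yields the first inequality of the theorem directly. The Poincar\'e version then follows immediately: if $\mathrm{avg}(f)=0$ then $\sum_y f(y)=0$ kills the $\frac{2-N}{2N}$ term, and the relation $\lVert\nabla f\rVert_2^2=2\sum_y\Gamma_1(f)(y)$ converts the remainder into the stated estimate after a single rearrangement. The only genuine obstacle is the bookkeeping inside $\Gamma^c_2(\tilde f)(p)$: one must correctly track the extra $\tfrac{1}{2}f(y)^2$ attached to each $\Gamma^c_1(\tilde f)(y)$, the $-\lvert V\rvert\,\Gamma^c_1(\tilde f)(p)$ subtraction, and the $-\tfrac{1}{2}\sum f(y)^2$ contribution from the cross term, so that the characteristic $\lvert V\rvert-3$ emerges with the correct sign and factor in the final constant.
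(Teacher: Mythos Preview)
Your proposal is correct and follows essentially the same route as the paper: the paper also extends $f$ to the cone with value $0$ at $p$, computes $\Delta^c f(p)$, $\Gamma_1^c(f)(p)$, and $\Gamma_2^c(f)(p)$ exactly as you outline (packaged as Lemmas~\ref{lem:cone-Delta}, \ref{lem:cone-Gamma1}, and \ref{lem:cone-Gamma3}, with the same integration-by-parts identity (\ref{eq:divergence}) used to collapse the cross term), and then substitutes into (\ref{eq:ccd}). Your anticipated formula for $\Gamma_2^c(\tilde f)(p)$ matches Lemma~\ref{lem:cone-Gamma3} verbatim, so the bookkeeping you flag as the only obstacle indeed works out.
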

%%%%%%%%%%%%%%%%%%%%%%%%%%%%%%%%%%%%%%%%%%%%%%%
%%%%%%%%%%%%%%%%%%%%%%%%%%%%%%%%%%%%%%%%%%%%%%%
\begin{corollary}\label{cor:main-1}
	If $G$ satisfies $CCD(K,N)$ condition, then 
	\small
	\be
	\lambda_1(G) \ge K+\frac{\lvert V\rvert-3}{2}. \notag
	\ee
	\normalsize
\end{corollary}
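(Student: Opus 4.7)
The plan is to derive this spectral gap estimate as an immediate consequence of the global Poincar\'e inequality established in Theorem~\ref{thm:main-1}, via the standard Rayleigh--quotient characterization of the first nonzero Laplacian eigenvalue.

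First, I would recall the variational description
\[
\lambda_1(G) \;=\; \inf \left\{\frac{\sum_{y\in V} \Gamma_1(f)(y)}{\sum_{y\in V} f^2(y)} \,:\, f \not\equiv 0,\ \textstyle\sum_{y\in V} f(y) = 0\right\},
\]
which follows from the summation-by-parts identity $\langle -\Delta f, f\rangle = \sum_{y\in V} \Gamma_1(f)(y)$ for the discrete $\Delta$ and $\Gamma_1$ invoked in Definition~\ref{defn:CCD}; any constant disparity between this energy and the norm $\|\nabla f\|_2$ is absorbed through the paper's convention $\|\nabla f\|_2^2 = 2\sum_{y\in V}\Gamma_1(f)(y)$.

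Second, I would apply Theorem~\ref{thm:main-1} to any nonzero $L^2$ function $f$ with $\sum_{y\in V} f(y) = 0$ -- for instance, a real eigenfunction realizing $\lambda_1(G)$. The vanishing-mean hypothesis kills the first term $\frac{2-N}{2N}\bigl(\sum_y f(y)\bigr)^2$ on the right-hand side of the Poincar\'e inequality (so the entire $N$-dependence drops out at this step), leaving
\[
\sum_{y\in V}\Gamma_1(f)(y) \;\ge\; \frac{2K+|V|-3}{4}\sum_{y\in V}f^2(y).
\]
Dividing by $\sum_{y\in V} f^2(y)$ and taking the infimum over admissible $f$ yields the desired lower bound on $\lambda_1(G)$.

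Because Theorem~\ref{thm:main-1} already encapsulates all the geometric content of the $CCD(K,N)$ hypothesis, the corollary requires no further appeal to the cone geometry or to the curvature condition \eqref{eq:ccd}, and there is no real conceptual obstacle. The only point that demands care is the bookkeeping of constants: one must track how the factor of two in the convention $\|\nabla f\|_2^2 = 2\sum_{y\in V}\Gamma_1(f)(y)$ interacts with the identification of the Dirichlet form with $\sum_{y\in V}\Gamma_1(f)(y)$, so that the Rayleigh quotient ultimately yields the bound $K + \tfrac{|V|-3}{2}$ exactly as stated rather than a version off by a harmless multiplicative factor.
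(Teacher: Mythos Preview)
Your approach---apply the Poincar\'e inequality of Theorem~\ref{thm:main-1} to a mean-zero function and read off the eigenvalue bound via the Rayleigh quotient---is exactly how the paper intends the corollary to follow (no separate proof is given; it is stated as an immediate consequence of Theorem~\ref{thm:main-1}).

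The one substantive point is the factor of two you yourself flag. As written, your Rayleigh quotient
\[
\lambda_1(G)=\inf_{\operatorname{avg}(f)=0}\frac{\sum_{y\in V}\Gamma_1(f)(y)}{\sum_{y\in V}f^2(y)}
\]
combined with $\sum_y\Gamma_1(f)(y)\ge\frac{2K+|V|-3}{4}\sum_y f^2(y)$ yields only $\lambda_1\ge\frac{2K+|V|-3}{4}$, i.e.\ half of the stated bound. The paper's convention (see \eqref{eq:Rayleigh}, \eqref{eq:divergence}, and its use in \eqref{eq:lambda-to-poincare}) is
\[
\lambda_1=\inf_{\operatorname{avg}(f)=0}\frac{\|\nabla f\|_2^2}{\|f\|_2^2}
\quad\text{with}\quad
\|\nabla f\|_2^2=2\sum_{y\in V}\Gamma_1(f)(y),
\]
so that $\lambda_1=\inf\ 2\sum_y\Gamma_1(f)(y)\big/\sum_y f^2(y)$. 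With this normalization the Poincar\'e inequality gives $\lambda_1\ge\frac{2K+|V|-3}{2}=K+\frac{|V|-3}{2}$ exactly as claimed. So the discrepancy is not ``harmless'': you must actually adopt the paper's Rayleigh quotient (with the extra factor of $2$) to recover the stated constant, rather than the version you wrote down.
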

%%%%%%%%%%%%%%%%%%%%%%%%%%%%%%%%%%%%%%%%%%%%%%%
%%%%%%%%%%%%%%%%%%%%%%%%%%%%%%%%%%%%%%%%%%%%%%%
\begin{theorem}\label{thm:main-2}
For any graph, $G$, and a given $N>1$, the \emph{conical curvature} cannot exceed the following number:
%%%%%%%%%%%%%%%%%%%%%%%%%%%%%%%%%%%%%%%%%%%%%%%
\small	
	\be
	K^c_{max} = \frac{\lvert V\rvert}{2}+\frac{3}{2}-2\frac{\lvert V\rvert}{N}. \notag
	\ee
	\normalsize
\end{theorem}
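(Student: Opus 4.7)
The plan is to turn the defining inequality \eqref{eq:ccd} into an upper bound on $K$ by testing it against a judiciously chosen function on the cone. Rearranging, any $f$ on $C(G)$ with $\Gamma_1^c(f)(p) > 0$ yields
$$ K \;\leq\; \frac{\Gamma_2^c(f)(p) - (\Delta^c f)^2(p)/N}{\Gamma_1^c(f)(p)}, $$
so a single well-chosen $f$ suffices to pin down an upper bound on the ``conical curvature'' (the largest $K$ for which $G$ admits $CCD(K,N)$). The art is to pick an $f$ whose quantities at $p$ depend only on $|V|$ and not on the internal edge structure of $G$, so that the resulting bound applies uniformly to every graph.

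The natural candidate is the function locally constant on the two layers of the cone: fix distinct $a,b \in \mathbb{R}$ and set $f(p) = a$ and $f(y) = b$ for every $y \in V$. Because $f$ is constant on $V$, every difference $f(z) - f(y)$ along a $G$-edge vanishes, and only the cone edges $\{p\}\times V$ contribute to $\Delta^c$ and $\Gamma_1^c$. This is the key cancellation that removes any dependence on the combinatorics of $G$.

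Carrying out the computation at $p$, I would first obtain the two easy pieces $\Delta^c f(p) = |V|(b-a)$ and $\Gamma_1^c(f)(p) = \tfrac{1}{2}|V|(a-b)^2$. For the $\Gamma_2^c$ term, expand $\Gamma_2^c(f)(p) = \tfrac{1}{2}\Delta^c \Gamma_1^c(f)(p) - \Gamma_1^c(f,\Delta^c f)(p)$; the ingredients at each $y \in V$ are $\Delta^c f(y) = a-b$ and $\Gamma_1^c(f)(y) = \tfrac{1}{2}(a-b)^2$, again because $f$ is constant on $V$. Summing over the $|V|$ cone neighbors of $p$ and assembling the pieces gives (after short bookkeeping) $\Gamma_2^c(f)(p) = \tfrac{1}{4}|V|(|V|+3)(a-b)^2$. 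Substituting into \eqref{eq:ccd} and dividing by the positive quantity $|V|(a-b)^2$ leaves
$$ \frac{|V|+3}{4} \;\geq\; \frac{|V|}{N} + \frac{K}{2}, $$
which rearranges exactly to $K \leq \tfrac{|V|}{2} + \tfrac{3}{2} - \tfrac{2|V|}{N} = K^c_{max}$.

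The only nontrivial step is the $\Gamma_2^c(f)(p)$ computation, which in general would require knowing $f$, $\Delta^c f$, and $\Gamma_1^c(f)$ at every neighbor of $p$ and so would a priori involve the full graph $G$. The decisive trick is the choice of $f$ as constant on $V$: it collapses all $G$-dependent sums to zero and reduces everything to polynomials in $|V|$ times $(a-b)^2$. I do not expect any genuine obstacle beyond careful bookkeeping, and the universality of the resulting bound in $G$ is exactly what the theorem asserts.
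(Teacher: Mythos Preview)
Your proof is correct and essentially identical to the paper's: both test the $CCD(K,N)$ inequality at $p$ against a nonzero constant function on $V$ (the paper phrases this as taking ``a nonzero harmonic function'' and routes through the Poincar\'e inequality of Theorem~\ref{thm:main-1} plus Cauchy--Schwarz, but for a constant test function Cauchy--Schwarz is an equality, so the two computations coincide). Your direct evaluation of $\Gamma_2^c(f)(p)$ is exactly Lemma~\ref{lem:cone-Gamma3} specialized to constant $f$, and the arithmetic matches.
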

%%%%%%%%%%%%%%%%%%%%%%%%%%%%%%%%%%%%%%%%%%%%%%%
%%%%%%%%%%%%%%%%%%%%%%%%%%%%%%%%%%%%%%%%%%%%%%%
\begin{theorem}[Curvature Maximizers]\label{thm:main-3}
	Suppose $G$ satisfies $CCD(K^c_{max},N)$. Then any function, $f$, realizes $K^c_{max}$ if and only if $f$ is either constant or $f-\operatorname{avg}(f)$ is an eigenfunction corresponding to $\lambda_1(G)=\frac{N-2}{4N}\lvert V\rvert$. Furthermore, when $G$ is a complete graph, $f$ must be constant (harmonic).
\end{theorem}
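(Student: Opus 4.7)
The plan is to trace the chain of manipulations behind Theorem~\ref{thm:main-1} at $K=K^c_{\max}$ and identify exactly which functions make every step an equality. Because $\Delta^c$, $\Gamma^c_1$ and $\Gamma^c_2$ at the cone vertex $p$ are explicit symmetric expressions in the values $f(y)$, $y\in V$, the one-point $CCD(K,N)$ inequality~\eqref{eq:ccd} is equivalent to the global inequality in Theorem~\ref{thm:main-1}, and so saturation of~\eqref{eq:ccd} by $f$ at $K=K^c_{\max}$ is the same as saturation of the corresponding Poincar\'e inequality. Substituting $K^c_{\max}=\frac{|V|}{2}+\frac{3}{2}-\frac{2|V|}{N}$ collapses the right-hand side of that Poincar\'e inequality to a fixed positive multiple of the Cauchy--Schwarz gap $|V|\sum_{y\in V} f^2(y) - \big(\sum_{y\in V} f(y)\big)^2$.

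The next step is to peel off the additive constant. Writing $f = c + g$ with $c = \operatorname{avg}(f)$ and $\sum_{y\in V} g(y) = 0$, the operator $\Gamma_1$ annihilates constants and the Cauchy--Schwarz gap depends only on $g$, so equality for $f$ is equivalent to equality for its zero-average part $g$. If $g\equiv 0$ then $f$ is constant, giving one class of maximizers. Otherwise the extremal identity reduces to saturation of the zero-average Poincar\'e inequality with its sharp constant, which is precisely the Rayleigh-quotient bound that produced Corollary~\ref{cor:main-1} at $K=K^c_{\max}$. The variational characterisation of the spectral gap then forces $g$ to be a first eigenfunction of the graph Laplacian with $\lambda_1(G) = \frac{N-2}{4N}|V|$, and conversely any such eigenfunction reverses the chain of inequalities to turn each step into an equality, giving the biconditional.

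For the final assertion I would exploit the exceptional symmetry of the complete graph: all nonzero Laplacian eigenvalues of $K_n$ are equal, and a direct computation in the normalisation fixed by the paper shows that this common value does not coincide with $\frac{N-2}{4N}|V|$ for any $N > 1$. Hence no nonconstant eigenfunction with the required eigenvalue can exist on $K_n$, the nonconstant branch of the biconditional is vacuous, and $f$ must be constant (and in particular harmonic). The main non-routine point is the book-keeping -- tracking the normalisations of $\Delta$, $\Gamma_1$ and $\lambda_1$ carefully enough to recover the constant $\frac{N-2}{4N}|V|$ and to certify the numerical mismatch on $K_n$; apart from that, the whole argument is a clean application of Cauchy--Schwarz together with standard variational eigenvalue arguments.
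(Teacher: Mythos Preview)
Your proposal is correct and lands on the same key identity as the paper: after substituting $K^c_{\max}$, the $CCD$ inequality at the cone point becomes
\[
\sum_{y\in V}\Gamma_1(f)(y)\ \ge\ \frac{N-2}{2N}\Bigl[\,|V|\sum_{y}f^2(y)-\bigl(\sum_{y}f(y)\bigr)^2\Bigr],
\]
and the question is which $f$ saturate it. From there the two arguments diverge in packaging. You split off the mean, reduce to zero-average $g$, and invoke the Rayleigh-quotient characterisation of $\lambda_1$ as a black box: a minimiser of $\sum\Gamma_1(g)/\sum g^2$ is a first eigenfunction. The paper instead runs the first-variation argument by hand: it perturbs $f\mapsto f+t\phi$, differentiates at $t=0$, and tests with $\phi=\delta_r$ to obtain directly the pointwise Euler--Lagrange equation $-2\Delta f(r)=\tfrac{N-2}{2N}\bigl[|V|f(r)-\sum_y f(y)\bigr]$, which is exactly the eigenfunction equation for $f-\operatorname{avg}(f)$. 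Your route is cleaner if one is happy to cite the variational principle; the paper's explicit linearisation is self-contained and has the side benefit of producing the relation $\Delta f=\tfrac{N-2}{4N}\,\bar\Delta f$ (with $\bar\Delta$ the Laplacian of the graph completion), which it then uses for the complete-graph case.

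For $K_n$ the two finishes are genuinely different. The paper observes that $\bar\Delta=\Delta$ on a complete graph, so the linearised identity forces $(1-\tfrac{N-2}{4N})\Delta f=0$ and hence $\Delta f=0$. You instead compare the required eigenvalue $\tfrac{N-2}{4N}|V|$ with the known nonzero eigenvalue $|V|$ of $K_n$ and note they never match for $N>1$. Both are valid; the paper's is a one-line algebraic consequence of its Euler--Lagrange equation, while yours needs the (easy) spectrum of $K_n$ but avoids introducing $\bar\Delta$.
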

%%%%%%%%%%%%%%%%%%%%%%%%%%%%%%%%%%%%%%%%%%%%%%%
%%%%%%%%%%%%%%%%%%%%%%%%%%%%%%%%%%%%%%%%%%%%%%%
\begin{corollary}[Ricci Curvature of Complete Graphs]
Suppose $G$ is the complete graph on $n$ vertices, then the $CD(K,N)$ property coincides with the $CCD(K^c,N^c)$ condition on the complete subgraph with $n-1$ vertices and the curvature of $G$ is $\frac{n}{2}+1-2\frac{(n-1)}{N}$. Furthermore any function that realizes this curvature bound is constant (harmonic).
\end{corollary}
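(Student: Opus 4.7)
The strategy rests on a simple structural identification: the cone $C(K_{n-1})$ over the complete graph on $n-1$ vertices is isomorphic to $K_n$, with the cone point $p$ playing the role of an arbitrary vertex. Indeed, by definition the cone adjoins a new vertex adjacent to every vertex of $K_{n-1}$, but every pair of vertices in $K_{n-1}$ is already adjacent, so the resulting graph has every pair of vertices connected. Under this identification the operators $\Delta^c,\Gamma_1^c,\Gamma_2^c$ on $C(K_{n-1})$ coincide with the usual graph operators $\Delta,\Gamma_1,\Gamma_2$ on $K_n$ at the distinguished vertex $p$.

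Using vertex-transitivity of $K_n$, the pointwise Bakry--\'Emery $CD(K,N)$ inequality holds uniformly on $K_n$ if and only if it holds at one vertex, which via the identification above is precisely the $CCD(K,N)$ condition on $K_{n-1}$ from \defnref{defn:CCD}. This yields the claimed coincidence of the two conditions. Next, I would plug $H=K_{n-1}$, with $|V(H)|=n-1$, into \thmref{thm:main-2} to obtain
\small
\be
K^c_{\max}=\frac{n-1}{2}+\frac{3}{2}-2\frac{n-1}{N}=\frac{n}{2}+1-2\frac{n-1}{N}, \notag
\ee
\normalsize
which is exactly the asserted curvature of $G=K_n$, and is sharp because of the coincidence of conditions just established.

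For the rigidity statement, I would invoke \thmref{thm:main-3} applied to the complete base graph $K_{n-1}$: its conclusion forces any function on $C(K_{n-1})\cong K_n$ realizing $K^c_{\max}$ to be constant, hence harmonic. The plan does not present any serious technical obstacle; the only point requiring a careful check is that the cone operators over $K_{n-1}$ faithfully reproduce the graph operators on $K_n$ at $p$, which is immediate since the underlying unweighted graph is literally the same. With that in hand, the corollary is a clean packaging of Theorems \ref{thm:main-2} and \ref{thm:main-3}.
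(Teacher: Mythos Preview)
Your proposal is correct and matches the paper's intended argument: the corollary is stated without a written proof, as a direct consequence of Theorems~\ref{thm:main-2} and~\ref{thm:main-3} via precisely the identification $C(K_{n-1})\cong K_n$ together with vertex-transitivity of $K_n$ that you describe. The one step you gloss over is that ``sharp because of the coincidence of conditions'' does not by itself show $K_{n-1}$ actually \emph{satisfies} $CCD(K^c_{\max},N)$ (which is also the hypothesis needed to invoke \thmref{thm:main-3}); this is easily supplied, since on $K_{n-1}$ every mean-zero function is an eigenfunction for $\lambda_1(K_{n-1})=n-1=|V|$, so the inequality $\sum_{y}\Gamma_1(f)(y)\ge \tfrac{N-2}{2N}\bigl[|V|\sum_y f^2(y)-(\sum_y f(y))^2\bigr]$ appearing in the proof of \thmref{thm:main-3} holds (in fact with equality).
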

\begin{remark}
	When $N=\infty$, our bound $K^c_{max} = 1 + \frac{n}{2} $ coincides with the maximum Ricci curvature of complete graphs as found in~\cite{KGPP}. 
\end{remark}
%%%%%%%%%%%%%%%%%%%%%%%%%%%%%%%%%%%%%%%%%%%%%%%
%%%%%%%%%%%%%%%%%%%%%%%%%%%%%%%%%%%%%%%%%%%%%%%
The following theorem illustrates an applications of our $\Gamma$-calculus on cones:
%%%%%%%%%%%%%%%%%%%%%%%%%%%%%%%%%%%%%%%%%%%%%%%
\begin{theorem}\label{thm:main-5}
    Suppose $G$ satisfies $CD(K,\infty)$ for $K \le \frac{1}{2}$ then the subgraph $G\subset C(G)$ satisfies $CD(K+\frac{1}{2},\infty)$.
\end{theorem}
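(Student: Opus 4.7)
The plan is to work pointwise at a fixed vertex $y \in V(G) \subset V(C(G))$, comparing the cone operators $\Delta^c, \Gamma_1^c, \Gamma_2^c$ at $y$ to the corresponding operators $\Delta, \Gamma_1, \Gamma_2$ in $G$. Since the only new neighbor of $y$ introduced by passing to $C(G)$ is the cone point $p$, one immediately obtains
\[
\Delta^c f(y) = \Delta f(y) + (f(p)-f(y)), \qquad \Gamma_1^c(f)(y) = \Gamma_1(f)(y) + \tfrac{1}{2}(f(p)-f(y))^2,
\]
and analogous formulas hold at every $G$-neighbor $z$ of $y$. Substituting these into
\[
\Gamma_2^c(f)(y) = \tfrac{1}{2}\Delta^c\Gamma_1^c(f)(y) - \Gamma_1^c(\Delta^c f,f)(y)
\]
and expanding, one decomposes $\Gamma_2^c(f)(y)$ into three pieces: the intrinsic graph contribution $\Gamma_2(f)(y)$, cross terms coupling the values $f(z)$ for $z \sim y$ with $f(p)$, and pure conical terms involving only $f(p)-f(y)$.

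Next, I would apply the hypothesis $CD(K,\infty)$ to the graph part, replacing $\Gamma_2(f)(y)$ by a lower bound of $K\Gamma_1(f)(y)$. The remaining task is to show that the sum of cross and conical pieces dominates
\[
K\cdot\tfrac{1}{2}(f(p)-f(y))^2 + \tfrac{1}{2}\Gamma_1^c(f)(y),
\]
which would yield $\Gamma_2^c(f)(y) \geq (K+\tfrac{1}{2})\Gamma_1^c(f)(y)$. Treating $t := f(p)-f(y)$ as a free parameter with the values $\{f(z)\}_{z\in V}$ held fixed, the inequality to establish reduces to a quadratic in $t$. I expect the leading coefficient of this quadratic in $t$ to be precisely $\tfrac{1}{2}-K$, so that positivity of the leading term is exactly the hypothesis $K \leq \tfrac{1}{2}$; the non-negativity of the quadratic for every $t$ would then amount to a discriminant condition, which in turn follows from the ambient $CD(K,\infty)$ inequality applied to a suitably centered version of $f$ on $G$.

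The main obstacle will be the algebraic bookkeeping in expanding $\Delta^c\Gamma_1^c(f)(y)$ and $\Gamma_1^c(\Delta^c f, f)(y)$: many cross terms of the form $(f(p)-f(z))(f(y)-f(z))$ and $(f(p)-f(y))(f(z)-f(y))$ will appear, and it takes careful grouping to see how they combine into a sum of a perfect square and the assumed graph $CD$-inequality. A secondary subtlety is the role of $f(p)$: since $f$ is an arbitrary function on the whole cone $C(G)$, the inequality must hold uniformly in $f(p)$; this is precisely why the argument is formulated as optimization over the parameter $t$ above, and why the constraint $K \leq \tfrac{1}{2}$ enters in the form stated rather than as a consequence of a different ingredient.
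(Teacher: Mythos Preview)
Your overall strategy --- expand $\Gamma_2^c$ at a vertex $x\in V$, isolate the intrinsic $\Gamma_2(f)(x)$ and bound it below by $K\Gamma_1(f)(x)$ via $CD(K,\infty)$, then control the remainder --- is exactly what the paper does. The paper, however, makes the computation far shorter by normalizing $f(p)=0$ at the outset (all the operators are shift-invariant, so this is free) and by quoting the closed formula of Lemma~\ref{lem:cone-Gamma3}:
\[
\Gamma_2^c(f)(x)=\Gamma_2(f)(x)+\tfrac{1}{2}\Gamma_1(f)(x)+\tfrac{1}{4}\sum_{y\in V}f^2(y)+\tfrac{1}{2}f^2(x).
\]
With this in hand there are no cross terms to chase: after using $\Gamma_2\ge K\Gamma_1$ and $\Gamma_1^c(f)(x)=\Gamma_1(f)(x)+\tfrac12 f^2(x)$, the leftover is $\tfrac14\sum_{y}f^2(y)+\tfrac{1-2K}{4}f^2(x)$, which is manifestly nonnegative once $K\le\tfrac12$. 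No quadratic-in-$t$ optimization, no discriminant, and no second appeal to $CD(K,\infty)$ is needed; the ``secondary subtlety'' you flag about $f(p)$ is dissolved by the normalization.

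Your route can also be pushed through, but two of your expectations are wrong. First, the leading $t^2$-coefficient of your quadratic is $\tfrac{|V|+1-2K}{4}$, not $\tfrac12-K$: the term $\tfrac14\sum_{y\in V}(f(y)-f(p))^2$ hidden in your ``cross and conical pieces'' is itself quadratic in $f(p)$ and contributes $\tfrac{|V|}{4}t^2$. Second, the discriminant condition that then remains is
\[
\Bigl(\sum_{y\in V}(f(y)-f(x))\Bigr)^{2}\le(|V|+1-2K)\sum_{y\in V}(f(y)-f(x))^{2},
\]
which follows from Cauchy--Schwarz as soon as $K\le\tfrac12$, not from a second application of $CD(K,\infty)$ to a recentered $f$. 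So the hypothesis $K\le\tfrac12$ is used once, in exactly the place the paper uses it, and the graph curvature-dimension inequality is used only once, to handle $\Gamma_2(f)(x)$.
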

%%%%%%%%%%%%%%%%%%%%%%%%%%%%%%%%%%%%%%%%%%%%%%%
%%%%%%%%%%%%%%%%%%%%%%%%%%%%%%%%%%%%%%%%%%%%%%%
%%%%%%%%%%%%%%%%%%%%%%%%%%%%%%%%%%%%%%%%%%%%%%%
%%%%%%%%%%%%%%%%%%%%%%%%%%%%%%%%%%%%%%%%%%%%%%%
%%%%%%%%%%%%%%%%%%%%%%%%%%%%%%%%%%%%%%%%%%%%%%%
%%%%%%%%%%%%%%%%%%%%%%%%%%%%%%%%%%%%%%%%%%%%%%%
\subsection*{Acknowledgments}
The authors would like to thank Professor J\'ozef Dodziuk for his interest and encouragement and the Mathematics Department at the CUNY Graduate Center for providing the first author access to their facilities. The authors are also very thankful for the interest and insight of Jorge Basilio. 
%%%%%%%%%%%%%%%%%%%%%%%%%%%%%%%%%%%%%%%%%%%%%%%
%%%%%%%%%%%%%%%%%%%%%%%%%%%%%%%%%%%%%%%%%%%%%%%
%%%%%%%%%%%%%%%%%%%%%%%%%%%%%%%%%%%%%%%%%%%%%%%
%%%%%%%%%%%%%%%%%%%%%%%%%%%%%%%%%%%%%%%%%%%%%%%
%%%%%%%%%%%%%%%%%%%%%%%%%%%%%%%%%%%%%%%%%%%%%%%
%%%%%%%%%%%%%%%%%%%%%%%%%%%%%%%%%%%%%%%%%%%%%%%
% Section: Preliminaries
%%%%%%%%%%%%%%%%%%%%%%%%%%%%%%%%%%%%%%%%%%%%%%%
%%%%%%%%%%%%%%%%%%%%%%%%%%%%%%%%%%%%%%%%%%%%%%%
%%%%%%%%%%%%%%%%%%%%%%%%%%%%%%%%%%%%%%%%%%%%%%%
%%%%%%%%%%%%%%%%%%%%%%%%%%%%%%%%%%%%%%%%%%%%%%%
%%%%%%%%%%%%%%%%%%%%%%%%%%%%%%%%%%%%%%%%%%%%%%%
%%%%%%%%%%%%%%%%%%%%%%%%%%%%%%%%%%%%%%%%%%%%%%%
\section{Preliminaries}
%%%%%%%%%%%%%%%%%%%%%%%%%%%%%%%%%%%%%%%%%%%%%%%
Let $G=(V,E)$ be an undirected, unweighted, connected, locally finite graph without any loop-edges. Let $f:V\to\mathbb{R}$ and consider the space of square-summable functions on the vertex set. 
%%%%%%%%%%%%%%%%%%%%%%%%%%%%%%%%%%%%%%%%%%%%%%%
%%%%%%%%%%%%%%%%%%%%%%%%%%%%%%%%%%%%%%%%%%%%%%%
 \par The graph Laplacian is given by 
%%%%%%%%%%%%%%%%%%%%%%%%%%%%%%%%%%%%%%%%%%%%%%%
\small
\be
\Delta f(x)=\sum_{y\sim x} \bigl ( f(y)-f(x)  \bigr), \label{lap}
\ee
\normalsize
%%%%%%%%%%%%%%%%%%%%%%%%%%%%%%%%%%%%%%%%%%%%%%%
where $y\sim x$ means that $(y,x)\in E$. Also, note that the graph Laplacian is a real valued, self-adjoint linear operator (for a thorough treatment of the graph Laplacian see~\cite{Dodziuk}).
%%%%%%%%%%%%%%%%%%%%%%%%%%%%%%%%%%%%%%%%%%%%%%%
%%%%%%%%%%%%%%%%%%%%%%%%%%%%%%%%%%%%%%%%%%%%%%%
\par Let $F\subset V$, then the boundary of $F$ is 
%%%%%%%%%%%%%%%%%%%%%%%%%%%%%%%%%%%%%%%%%%%%%%%
\small
\be
	\partial F:=\{(x,y) \in E \mid\  x \in F \text{ and }  y \in V \setminus F   \}. \notag
\ee
\normalsize
%%%%%%%%%%%%%%%%%%%%%%%%%%%%%%%%%%%%%%%%%%%%%%%
The isoperimetric constant (or Cheeger's constant) is then defined as
%%%%%%%%%%%%%%%%%%%%%%%%%%%%%%%%%%%%%%%%%%%%%%%
\small
\be
h(G):=\inf\left\{\dfrac{\lvert\partial F\rvert}{\min\{\lvert F\rvert, \lvert V\setminus F\rvert\}}:\ 0<\lvert F\rvert<\infty\right\}. \notag
\ee
\normalsize
%%%%%%%%%%%%%%%%%%%%%%%%%%%%%%%%%%%%%%%%%%%%%%%
%%%%%%%%%%%%%%%%%%%%%%%%%%%%%%%%%%%%%%%%%%%%%%%
\par A well known generalization of Cheeger's and Buser's results for Riemannian manifolds is the following theorem due to Dodziuk~\cite{Dodziuk} and Alon-Milman~\cite{Alon-Milman}.
%%%%%%%%%%%%%%%%%%%%%%%%%%%%%%%%%%%%%%%%%%%%%%%
%%%%%%%%%%%%%%%%%%%%%%%%%%%%%%%%%%%%%%%%%%%%%%%
\begin{theorem}[\cite{Dodziuk}, \cite{Alon-Milman}]
Let $G=(V,E)$ be a finite, connected, edge-loop free graph. Let $d_{max}=\sup_{v\in V}\{\deg(v)\}$ and let $\lambda_1$ be the first non-trivial eigenvalue of $\Delta$, then
%%%%%%%%%%%%%%%%%%%%%%%%%%%%%%%%%%%%%%%%%%%%%%% 
\small
\be
\frac{\lambda_1}{2}\leq h(G)\leq\sqrt{2d_{max}\lambda_1}. \label{eq:DAM}
\ee
\normalsize
\end{theorem}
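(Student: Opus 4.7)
The bound \eqref{eq:DAM} is a two-sided Cheeger-type inequality, and the two halves are of quite different character; I would prove them separately. The lower bound $\lambda_1/2 \le h(G)$ follows directly from the variational characterization of $\lambda_1$ applied to an indicator test function, while the upper bound $h(G) \le \sqrt{2 d_{max} \lambda_1}$ is the discrete Cheeger inequality and requires a level-set / co-area argument followed by a Cauchy--Schwarz step.

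For the lower bound, pick $F \subset V$ with $0 < |F| \le |V|/2$ and test the Rayleigh quotient against the mean-zero function $f = \chi_F - |F|/|V|$. A direct calculation gives $\sum_{(x,y) \in E}(f(x) - f(y))^2 = |\partial F|$ and $\sum_{x \in V} f(x)^2 = |F|(|V| - |F|)/|V|$. The variational principle for the first non-trivial eigenvalue of $-\Delta$ then yields
\begin{equation*}
\lambda_1 \;\le\; \frac{|V|\,|\partial F|}{|F|(|V| - |F|)} \;\le\; \frac{2\,|\partial F|}{|F|},
\end{equation*}
using $|V|-|F| \ge |V|/2$, and the lower bound follows by infimizing over $F$.

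For the Cheeger direction, I would take an $L^2$-normalized eigenfunction $u$ with $\Delta u = -\lambda_1 u$, set $V_+ := \{x : u(x) > 0\}$, assume without loss of generality that $|V_+| \le |V|/2$, and work with the nonnegative function $g := u \cdot \chi_{V_+}$. Two ingredients are needed. First, the Dirichlet-type estimate $\sum_{(x,y) \in E}(g(x) - g(y))^2 \le \lambda_1 \|g\|_2^2$, obtained by checking edge-by-edge that $-\Delta g(x) \le \lambda_1 g(x)$ at every $x \in V_+$ (the off-$V_+$ contribution to $\Delta g$ at $x$ is at least as large as the corresponding contribution to $\Delta u$, since $u(y)\le 0$ off $V_+$). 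Second, the discrete co-area identity
\begin{equation*}
\sum_{(x,y) \in E}|g^2(x) - g^2(y)| \;=\; \int_0^\infty |\partial \{g^2 > t\}|\,dt \;\ge\; h(G)\,\|g\|_2^2,
\end{equation*}
where the last inequality uses $|\{g^2 > t\}| \le |V_+| \le |V|/2$. Factoring $g^2(x) - g^2(y) = (g(x) - g(y))(g(x) + g(y))$ and applying Cauchy--Schwarz bounds the middle term above by
\begin{equation*}
\Bigl(\sum_{e \in E}(g(x)-g(y))^2\Bigr)^{\!1/2} \Bigl(\sum_{e \in E}(g(x)+g(y))^2\Bigr)^{\!1/2} \;\le\; \bigl(\lambda_1 \|g\|_2^2\bigr)^{1/2}\bigl(2 d_{max} \|g\|_2^2\bigr)^{1/2},
\end{equation*}
where the final factor comes from $\sum_{e}(g(x) + g(y))^2 \le 2\sum_x \deg(x)\, g(x)^2 \le 2 d_{max} \|g\|_2^2$. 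Dividing through by $\|g\|_2^2$ delivers the Cheeger bound.

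The main obstacle is the Dirichlet estimate for the positive part $g$: in the smooth setting the analogous fact follows from a pointwise chain-rule argument, but in the discrete setting it requires the subtle edge-by-edge comparison $-\Delta g(x) \le \lambda_1 g(x)$ on $V_+$ rather than just a global summation identity. Once this is established, the rest of the argument assembles along standard co-area and Cauchy--Schwarz lines.
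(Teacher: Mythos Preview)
The paper does not supply its own proof of this theorem: it is stated in the Preliminaries as a background result, attributed to Dodziuk and Alon--Milman, and is used later only as a black box (in the proof of the $\lambda_1$ lower bound in \S\ref{sec:lambda1}). So there is no in-paper argument to compare against.

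That said, your proof is the standard one and is correct. The lower bound via the centered indicator $\chi_F - |F|/|V|$ and the Rayleigh quotient is fine (be mindful that the paper's convention $\|\nabla f\|_2^2 = 2\sum_y \Gamma_1(f)(y)$ counts each edge twice, so the constant matches \eqref{eq:DAM} only once you track that factor). For the Cheeger direction, your edge-by-edge verification that $-\Delta g(x) \le \lambda_1 g(x)$ on $V_+$ is exactly the right substitute for the smooth chain rule: for $y \notin V_+$ one has $g(y)=0 \ge u(y)$, so $g(x)-g(y) \le u(x)-u(y)$ termwise, and summation by parts then gives $\sum_{e}(g(x)-g(y))^2 \le \lambda_1\|g\|_2^2$. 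The co-area plus Cauchy--Schwarz step, together with $\sum_e (g(x)+g(y))^2 \le 2\sum_x \deg(x)g(x)^2 \le 2d_{\max}\|g\|_2^2$, then closes the argument. One small point worth stating explicitly: you need $g \not\equiv 0$ to divide by $\|g\|_2^2$, which is guaranteed because $u$ is a non-trivial eigenfunction with mean zero, so both $V_+$ and $V_-$ are nonempty and you may always arrange $|V_+| \le |V|/2$ by replacing $u$ with $-u$ if necessary.
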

%%%%%%%%%%%%%%%%%%%%%%%%%%%%%%%%%%%%%%%%%%%%%%%
%%%%%%%%%%%%%%%%%%%%%%%%%%%%%%%%%%%%%%%%%%%%%%%
The $\Gamma$ operators of Bakry-\'Emery associated to the graph Laplacian, $\Delta$, are:
%%%%%%%%%%%%%%%%%%%%%%%%%%%%%%%%%%%%%%%%%%%%%%%
\small
\begin{eqnarray}
\Gamma_1(f,g)(x) &=&\frac{1}{2} \biggl [\Delta(fg)(x)- g(x) \Delta f(x)-f(x)\Delta g(x) \biggr],\label{gamma1}\\
\Gamma_2(f,g)(x) &=&\frac{1}{2} \biggl[ \Delta \Gamma_1(f,g) (x)-\Gamma_1 \bigl ( \Delta f , g \bigr)(x)-\Gamma_1\bigl( f, \Delta g \bigr)(x) \biggr].\label{gamma2}
\end{eqnarray}
\normalsize
%%%%%%%%%%%%%%%%%%%%%%%%%%%%%%%%%%%%%%%%%%%%%%%
%%%%%%%%%%%%%%%%%%%%%%%%%%%%%%%%%%%%%%%%%%%%%%%
It is straightforward to check that 
%%%%%%%%%%%%%%%%%%%%%%%%%%%%%%%%%%%%%%%%%%%%%%%
\small
\be
\Gamma_1(f,g)(x) = \frac{1}{2}\sum_{y\sim x}(f(y)-f(x))(g(y)-g(x)) =: \frac{1}{2} \langle \nabla f, \nabla g \rangle. \label{eq:gradfg}
\ee
\normalsize
%%%%%%%%%%%%%%%%%%%%%%%%%%%%%%%%%%%%%%%%%%%%%%%
Throughout these notes $\Gamma_1 (f) := \Gamma_1(f,f)$ and similarly $\Gamma_2 (f) := \Gamma_2(f,f)$. Thus $\Gamma_1(f)(x)=\frac{1}{2}\lvert\nabla f(x)\rvert^2$ and one can verify the following useful divergence-type identity:
%%%%%%%%%%%%%%%%%%%%%%%%%%%%%%%%%%%%%%%%%%%%%%%
\small
\be\label{eq:divergence}
\frac{1}{2}\lVert\nabla f\rVert_2^2=\sum_{y\in V}\Gamma_1(f)(y)=-\sum_{y\in V}f(y)\Delta f(y).
\ee
\normalsize
%%%%%%%%%%%%%%%%%%%%%%%%%%%%%%%%%%%%%%%%%%%%%%%
%%%%%%%%%%%%%%%%%%%%%%%%%%%%%%%%%%%%%%%%%%%%%%%
%%%%%%%%%%%%%%%%%%%%%%%%%%%%%%%%%%%%%%%%%%%%%%%
\begin{definition}[Bakry-\'Emery Curvature-Dimension Condition]\label{defn:cdkn}
Suppose $K \in \mathbb{R}$ and $N\in(1,\infty]$. We say that a graph $G=(V,E)$ satisfies the curvature-dimension conditions, $CD(K,N)$, if for every $x\in V$ and every $f\in\ell^2(V)$,
%%%%%%%%%%%%%%%%%%%%%%%%%%%%%%%%%%%%%%%%%%%%%%%
\small
 \be
  \Gamma_2(f)(x)\geq\frac{(\Delta f)^2(x)}{N}+K\Gamma_1(f)(x).
 \ee
 \normalsize
 Note when $N=\infty$, the second term in the inequality above is understood to be 0.
\end{definition}
%%%%%%%%%%%%%%%%%%%%%%%%%%%%%%%%%%%%%%%%%%%%%%%
%%%%%%%%%%%%%%%%%%%%%%%%%%%%%%%%%%%%%%%%%%%%%%%
%%%%%%%%%%%%%%%%%%%%%%%%%%%%%%%%%%%%%%%%%%%%%%%
\begin{defn}[Uniform and Pointwise Ricci Curvatures]
We define the dimensional (respectively, dimensionless) Ricci curvature of the graph $G$, $\Ric_{N}(G)$ (respectively, $\Ric_{\infty}(G)$), by
%%%%%%%%%%%%%%%%%%%%%%%%%%%%%%%%%%%%%%%%%%%%%%%
\small
\be
\Ric_N(G) := \sup \left\{    K \ : \ \text{$G$ satisfies $CD(K,N)$}      \right\} \notag
\ee
\normalsize
%%%%%%%%%%%%%%%%%%%%%%%%%%%%%%%%%%%%%%%%%%%%%%%
and 
\small
\be
	\Ric_{\infty}(G) := \sup \left\{    K \ : \ \text{$G$ satisfies $CD(K,\infty)$}      \right\}. \notag
\ee
\normalsize
%%%%%%%%%%%%%%%%%%%%%%%%%%%%%%%%%%%%%%%%%%%%%%%
Similarly, we define the pointwise curvatures by
%%%%%%%%%%%%%%%%%%%%%%%%%%%%%%%%%%%%%%%%%%%%%%%
\small
\be
\Ric_N(y):=\sup\{K:\Gamma_2(f)(y)\geq\frac{1}{N}(\Delta f)^2(y)+K\Gamma_1(f)(y) ,\ \forall f \} \notag
\ee	
\normalsize
%%%%%%%%%%%%%%%%%%%%%%%%%%%%%%%%%%%%%%%%%%%%%%%
and
\small
\be
\Ric_{\infty}(y):=\sup\{K:\Gamma_2(f)(y)\geq K\Gamma_1(f)(y) ,\ \forall f \}. \notag
\ee	
\normalsize
\end{defn}
%%%%%%%%%%%%%%%%%%%%%%%%%%%%%%%%%%%%%%%%%%%%%%%
%%%%%%%%
%%%%%%%%%%%%%%%%%%%%%%%%%%%%%%%%%%%%%%%%%%%%%%%
%%%%%%%%%%%%%%%%%%%%%%%%%%%%%%%%%%%%%%%%%%%%%%%
\begin{definition}[Conical Ricci Curvatures]
We define the conical Ricci curvature by 
%%%%%%%%%%%%%%%%%%%%%%%%%%%%%%%%%%%%%%%%%%%%%%%
\small
\be
CRic_N(G):=\sup\{K: \text{$G$ satisfies $CCD(K,N)$ as in (\ref{eq:ccd})}\} \notag
\ee	
\normalsize
and
%%%%%%%%%%%%%%%%%%%%%%%%%%%%%%%%%%%%%%%%%%%%%%%
\small
\be
CRic_{\infty}(G):=\sup\{K: \text{$G$ satisfies $CCD(K,\infty)$ as in (\ref{eq:ccd})}\}.	\notag
\ee	
\normalsize
%%%%%%%%%%%%%%%%%%%%%%%%%%%%%%%%%%%%%%%%%%%%%%%
\end{definition}
%%%%%%%%%%%%%%%%%%%%%%%%%%%%%%%%%%%%%%%%%%%%%%%
%%%%%%%%%%%%%%%%%%%%%%%%%%%%%%%%%%%%%%%%%%%%%%%
We close this section by recalling that the first non-zero eigenvalue of the Laplacian may be computed via the Rayleigh quotient:
%%%%%%%%%%%%%%%%%%%%%%%%%%%%%%%%%%%%%%%%%%%%%%%
\small
\be\label{eq:Rayleigh}
	\lambda_1 = \inf \left\{ \frac{\| \nabla f \|^2 }{\| f \|^2} :\ \operatorname{avg}(f) = 0  \right\}.
\ee
\normalsize
%%%%%%%%%%%%%%%%%%%%%%%%%%%%%%%%%%%%%%%%%%%%%%%
%%%%%%%%%%%%%%%%%%%%%%%%%%%%%%%%%%%%%%%%%%%%%%%
%%%%%%%%%%%%%%%%%%%%%%%%%%%%%%%%%%%%%%%%%%%%%%%
%%%%%%%%%%%%%%%%%%%%%%%%%%%%%%%%%%%%%%%%%%%%%%%
%%%%%%%%%%%%%%%%%%%%%%%%%%%%%%%%%%%%%%%%%%%%%%%
%%%%%%%%%%%%%%%%%%%%%%%%%%%%%%%%%%%%%%%%%%%%%%%
%%%%%%%%%%%%%%%%%%%%%%%%%%%%%%%%%%%%%%%%%%%%%%%
% Section: Cone Over Graphs and Its Gamma Calculus
%%%%%%%%%%%%%%%%%%%%%%%%%%%%%%%%%%%%%%%%%%%%%%%
%%%%%%%%%%%%%%%%%%%%%%%%%%%%%%%%%%%%%%%%%%%%%%%
%%%%%%%%%%%%%%%%%%%%%%%%%%%%%%%%%%%%%%%%%%%%%%%
%%%%%%%%%%%%%%%%%%%%%%%%%%%%%%%%%%%%%%%%%%%%%%%
%%%%%%%%%%%%%%%%%%%%%%%%%%%%%%%%%%%%%%%%%%%%%%%
%%%%%%%%%%%%%%%%%%%%%%%%%%%%%%%%%%%%%%%%%%%%%%%
\section{Cones over Graphs and Their $\Gamma - $ Calculus}
The complete cone, $C(G)$, over a finite graph $G$ is constructed by taking the graph Cartesian product of $G$ and $H$, $G \Box H$, where $H=(\{q,p\},\{(q,p)\})$ is the complete graph on two vertices $q$ and $p$, and then identifying all the vertices whose second component is $p$. In this paper $p$ refers to the cone point of $C(G)$. 
%%%%%%%%%%%%%%%%%%%%%%%%%%%%%%%%%%%%%%%%%%%%%%%
%%%%%%%%%%%%%%%%%%%%%%%%%%%%%%%%%%%%%%%%%%%%%%%
\par More generally for a subset, $X \subset V(G)$, the partial cone, $C\left( X , G \right)$, is a subgraph of $C(G)$ containing  $G$ and all edges $(x,p)$, $x \in X$. For brevity we will use a superscript $c$ to denote any operation that is taking place in a partial cone over $G$. Notice that any vertex $v\in V(G)$ can be thought of as the cone point over the 1-sphere based at $v$, i.e. $S_v^1:=\{y\in V\ \mid \ d_G(y,v)=1\}=X$ in the above construction. In this way partial cones can be useful in studying cliques.
%%%%%%%%%%%%%%%%%%%%%%%%%%%%%%%%%%%%%%%%%%%%%%%
%%%%%%%%%%%%%%%%%%%%%%%%%%%%%%%%%%%%%%%%%%%%%%%
\par The first subsection is devoted to proving a few lemmas that calculate the $\Delta$ and $\Gamma$ operators of a partial cone in terms of the similar operators on the base graph. The last subsection is devoted to an immediate result.
%%%%%%%%%%%%%%%%%%%%%%%%%%%%%%%%%%%%%%%%%%%%%%%
%%%%%%%%%%%%%%%%%%%%%%%%%%%%%%%%%%%%%%%%%%%%%%%
\subsection{$\Gamma$-Calculus on a Cone}
\par Since $\Delta$ and $\Gamma$ operators agree for functions that differ by a constant we may assume, without loss of generality, that $f(p)=0$. 
%%%%%%%%%%%%%%%%%%%%%%%%%%%%%%%%%%%%%%%%%%%%%%%
Denote by $S^n_p$ and $B^n_p$ the metric spheres and balls (resp.) with radius $n$ and center $p$ in the cone. For any subset $B \subset V$, the notation $v \in B\sim x$ means $v \in B$ and $v \sim x$. 
%%%%%%%%%%%%%%%%%%%%%%%%%%%%%%%%%%%%%%%%%%%%%%%
\begin{remark}
 Note that $\Delta$ and $\Gamma_1$ only depend on vertices that are at most one away. Thus, $\Delta^c f(x)=\Delta f(x)$ and $\Gamma^c_1(f)(x)=\Gamma_1(f)(x)$ when $x\nsim p$.
\end{remark}
%%%%%%%%%%%%%%%%%%%%%%%%%%%%%%%%%%%%%%%%%%%%%%%
\begin{lemma}\label{lem:cone-Delta}
Let $f$ be a function on the cone with $f(p) =0$ then,	
%%%%%%%%%%%%%%%%%%%%%%%%%%%%%%%%%%%%%%%%%%%%%%%
\small
\be	
\Delta^cf(x)=\begin{cases}
	\Delta f(x)-f(x); & x\sim p\\
	\sum_{y\in S^1_p}f(y); & x=p\\
\end{cases} \notag	
\ee	
\normalsize
\end{lemma}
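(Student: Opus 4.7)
The plan is to argue directly from the definition of the discrete Laplacian, splitting into the two cases declared in the statement and using the explicit description of adjacency in the (partial) cone. Recall that the cone $C(G)$ is obtained by joining the cone point $p$ to every vertex in the chosen subset $X\subset V(G)$, and that $S^1_p$ denotes the set of vertices adjacent to $p$ in the cone. So the neighbors of $p$ in $C(G)$ are exactly the vertices of $S^1_p$, while a vertex $x\sim p$ has one additional neighbor in the cone (namely $p$) beyond its neighbors in $G$.

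First I would handle the case $x=p$. Writing out $\Delta^c f(p)=\sum_{y\sim_c p}(f(y)-f(p))$ and using $f(p)=0$ together with the identification of the cone-neighbors of $p$ as $S^1_p$ gives directly
$$\Delta^c f(p)=\sum_{y\in S^1_p}f(y),$$
which is the second branch of the case formula.

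Next I would treat the case $x\sim p$. Here the set of cone-neighbors of $x$ is the disjoint union of its $G$-neighbors with $\{p\}$, so the sum defining $\Delta^c f(x)$ splits as
$$\Delta^c f(x)=\sum_{y\sim_G x}\bigl(f(y)-f(x)\bigr)+\bigl(f(p)-f(x)\bigr)=\Delta f(x)+\bigl(0-f(x)\bigr)=\Delta f(x)-f(x),$$
which is the first branch. The remark preceding the lemma takes care of any vertex $x\not\sim p$ (and $x\neq p$), where adjacency in $C(G)$ coincides with adjacency in $G$ and hence $\Delta^c f(x)=\Delta f(x)$, so the two stated cases exhaust what needs to be checked.

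There is really no conceptual obstacle; the proof is a routine unpacking of the cone construction and the definition of the graph Laplacian, with the normalization $f(p)=0$ doing the small amount of bookkeeping needed to suppress the $f(p)$ terms. The only thing to be careful about is keeping the partial-cone setup in mind so that the identification ``cone-neighbors of $p$ $=S^1_p$'' is consistent whether we work with the full cone or a partial cone $C(X,G)$.
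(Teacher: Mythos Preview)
Your proof is correct and essentially identical to the paper's own argument: both proceed by direct expansion of the Laplacian from its definition, splitting the cone-neighbors of $x$ into the $G$-neighbors together with $p$ when $x\sim p$, and identifying the cone-neighbors of $p$ as $S^1_p$ when $x=p$. The only cosmetic difference is that the paper treats the case $x\sim p$ first and $x=p$ second, whereas you reverse the order.
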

%%%%%%%%%%%%%%%%%%%%%%%%%%%%%%%%%%%%%%%%%%%%%%%
%%%%%%%%%%%%%%%%%%%%%%%%%%%%%%%%%%%%%%%%%%%%%%%
\begin{proof}

\begin{enumerate}
\item 	If $x\sim p$, then 
%%%%%%%%%%%%%%%%%%%%%%%%%%%%%%%%%%%%%%%%%%%%%%%
	\small
	\be
	\Delta^c f (x)=\sum_{y\in C\sim x} \bigl( f(y)-f(x) \bigr) = \sum_{y\in V\sim x} \bigl( f(y)-f(x) \bigr)+ \bigl( f(p)-f(x) \bigr) = \Delta f(x)-f(x). \notag
	\ee
	\normalsize
%%%%%%%%%%%%%%%%%%%%%%%%%%%%%%%%%%%%%%%%%%%%%%%
%%%%%%%%%%%%%%%%%%%%%%%%%%%%%%%%%%%%%%%%%%%%%%%
\item If $x=p$, then
%%%%%%%%%%%%%%%%%%%%%%%%%%%%%%%%%%%%%%%%%%%%%%%
	\small
	\be
	\Delta^c f (p)= \sum_{y\in C\sim p} \bigl( f(y)-f(p) \bigl) = \sum_{y\in S^1_p} f(y). \notag
	\ee
	\normalsize 
	\end{enumerate}
\end{proof}
%%%%%%%%%%%%%%%%%%%%%%%%%%%%%%%%%%%%%%%%%%%%%%%
%%%%%%%%%%%%%%%%%%%%%%%%%%%%%%%%%%%%%%%%%%%%%%%
\begin{lemma}\label{lem:cone-Gamma1}
Let $f$ be a function on the cone with $f(p) =0$ then,		
%%%%%%%%%%%%%%%%%%%%%%%%%%%%%%%%%%%%%%%%%%%%%%%
\small
\be
\Gamma_1^c(f)(x)=\begin{cases}
	\Gamma_1(f)(x)+\frac{1}{2}f^2(x); & x\sim p\\
	\frac{1}{2}\sum_{y\in S^1_p}f^2(y); & x=p\\
	\end{cases} \notag
\ee
\normalsize
\end{lemma}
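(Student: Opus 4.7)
The plan is to compute $\Gamma_1^c(f)(x)$ directly from the defining identity (\ref{eq:gradfg}), namely $\Gamma_1^c(f)(x) = \frac{1}{2}\sum_{y \in C(G), y\sim x}(f(y)-f(x))^2$, and to split the sum according to whether the neighbor $y$ lies in the base graph $G$ or is the cone point $p$. The normalization $f(p)=0$ from the preceding remark eliminates the contribution of $f(p)$ from each difference and reduces everything to known quantities on $G$.

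For the first case $x \sim p$, the cone adds exactly one neighbor to $x$, namely $p$ itself; the rest of the neighbors of $x$ in $C(G)$ coincide with its neighbors in $G$. So I would write
\[
\Gamma_1^c(f)(x) = \tfrac{1}{2}\sum_{y\in V, y\sim x}(f(y)-f(x))^2 + \tfrac{1}{2}(f(p)-f(x))^2,
\]
recognize the first sum as $\Gamma_1(f)(x)$ by (\ref{eq:gradfg}), and use $f(p)=0$ to turn the second term into $\tfrac{1}{2}f^2(x)$, which is exactly the claimed formula.

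For the second case $x=p$, the neighbors of $p$ in $C(G)$ are precisely the vertices of the $1$-sphere $S_p^1$ (by construction of the full cone, this is all of $V$, but I will phrase it in terms of $S_p^1$ to match the statement and to stay consistent with the partial-cone notation used earlier). Then
\[
\Gamma_1^c(f)(p) = \tfrac{1}{2}\sum_{y\in S_p^1}(f(y)-f(p))^2 = \tfrac{1}{2}\sum_{y\in S_p^1} f^2(y),
\]
again using $f(p)=0$.

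There is no real obstacle here: the lemma is an immediate unpacking of the definition of $\Gamma_1$ on the cone combined with the remark that allows us to fix $f(p)=0$. The only thing to be careful about is bookkeeping of which neighbors of $x$ are new in $C(G)$ versus inherited from $G$, and this is handled cleanly by the two cases $x\sim p$ and $x=p$.
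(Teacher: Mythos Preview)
Your proof is correct and follows exactly the same approach as the paper: apply the explicit formula (\ref{eq:gradfg}) for $\Gamma_1^c$, split the neighbor sum into the $G$-neighbors and the cone point $p$, and use $f(p)=0$ to simplify. There is nothing to add.
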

%%%%%%%%%%%%%%%%%%%%%%%%%%%%%%%%%%%%%%%%%%%%%%%
%%%%%%%%%%%%%%%%%%%%%%%%%%%%%%%%%%%%%%%%%%%%%%%
\begin{proof}
\begin{enumerate}
\item  If $x\sim p$, then using (\ref{eq:gradfg})
%%%%%%%%%%%%%%%%%%%%%%%%%%%%%%%%%%%%%%%%%%%%%%%
\small
	\be
	\Gamma_1^c(f)(x)=\frac{1}{2}\sum_{y\in C\sim x} \bigl( f(y)-f(x) \bigr)^2 = \frac{1}{2}\sum_{y\in V\sim x} \bigl( f(y)-f(x) \bigl)^2 + \frac{1}{2} \bigl( f(p)-f(x) \bigl)^2= \Gamma_1(f)(x)+\frac{1}{2}f^2(x). \notag
	\ee
\normalsize
%%%%%%%%%%%%%%%%%%%%%%%%%%%%%%%%%%%%%%%%%%%%%%%
%%%%%%%%%%%%%%%%%%%%%%%%%%%%%%%%%%%%%%%%%%%%%%%
\item If $x=p$, then using (\ref{eq:gradfg})
	%%%%%%%%%%%%%%%%%%%%%%%%%%%%%%%%%%%%%%%%%%%%%%%
	\small
	\be
	\Gamma_1^c(f)(p)=\frac{1}{2}\sum_{y\in V} \bigl( f(y)-f(p) \bigr)^2 =\frac{1}{2}\sum_{y\in S^1_p} f^2(y). \notag
	\ee
	\normalsize 
\end{enumerate}	
\end{proof}
%%%%%%%%%%%%%%%%%%%%%%%%%%%%%%%%%%%%%%%%%%%%%%%
%%%%%%%%%%%%%%%%%%%%%%%%%%%%%%%%%%%%%%%%%%%%%%%
In the next few lemmas we calculate the constituent parts that appear in the definition of $\Gamma^c_2$.
\begin{remark}
Note that $\Gamma^c_2$ depends on vertices at most two away. Thus $\Gamma^c_2$ coincides with $\Gamma_2$ when $x\in V \setminus B_p^2$.
\end{remark}
%%%%%%%%%%%%%%%%%%%%%%%%%%%%%%%%%%%%%%%%%%%%%%%
%%%%%%%%%%%%%%%%%%%%%%%%%%%%%%%%%%%%%%%%%%%%%%%
%%%%%%%%%%%%%%%%%%%%%%%%%%%%%%%%%%%%%%%%%%%%%%%
\begin{lemma}
Let $f$ be a function defined on the cone, and suppose $f(p) = 0$, then	
%%%%%%%%%%%%%%%%%%%%%%%%%%%%%%%%%%%%%%%%%%%%%%%
\small
\be
\Gamma_1^c \bigl( f,\Delta^cf \bigr)(x)=\begin{cases}
	\Gamma_1 \bigl( f,\Delta f \bigr)(x)-\frac{1}{2}\sum_{y\in S^1_p\sim x}f(y) \bigl( f(y)-f(x) \bigr); & x\in S^2_p\\
	\Gamma_1 \bigl( f,\Delta f \bigr)(x)-\frac{1}{2}\sum_{y\in S^1_p\sim x} \bigl( f(y)-f(x) \bigr)^2\\ 
	    \qquad\qquad\qquad+\frac{1}{2}f(x)\sum_{y\in S^2_p\sim x} \bigl( f(y)-f(x) \bigr)\\ 
	    \qquad\qquad\qquad-\frac{1}{2}f(x)\sum_{y\in S^1_p}f(y)+\frac{1}{2}f(x)\Delta f(x)-\frac{1}{2}f^2(x) & x\sim p\\
	\frac{1}{2}\sum_{y\in S^1_p}f(y)\Delta f(y)-\frac{1}{2}\sum_{y\in S^1_p}f^2(y)-\frac{1}{2}(\sum_{y\in S^1_p}f(y))^2 & x=p
	\end{cases}\notag
\ee
\normalsize
\end{lemma}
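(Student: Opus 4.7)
The plan is to argue case by case directly from the definition
\[
\Gamma_1^c(f,g)(x)=\tfrac{1}{2}\sum_{y\sim x \text{ in } C(G)}\bigl(f(y)-f(x)\bigr)\bigl(g(y)-g(x)\bigr),
\]
specialized to $g=\Delta^c f$, and then to substitute the expressions for $\Delta^c f$ given by Lemma~\ref{lem:cone-Delta} according to where the neighbor $y$ sits relative to $p$. The bookkeeping convention will be to split any sum over the cone-neighbors of $x$ into (i) neighbors in $S^1_p$, (ii) neighbors in $S^2_p$ (i.e.\ $V$-neighbors not adjacent to $p$), and (iii) the cone point $p$ itself when it is a neighbor of $x$.

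For the first case, $x\in S^2_p$, the vertex $x$ is not adjacent to $p$, so its cone-neighbors are exactly its $G$-neighbors and $\Delta^c f(x)=\Delta f(x)$. For each neighbor $y\in S^1_p\sim x$, Lemma~\ref{lem:cone-Delta} gives $\Delta^c f(y)-\Delta^c f(x)=\Delta f(y)-\Delta f(x)-f(y)$, while for neighbors $y\in S^2_p\sim x$ it gives $\Delta^c f(y)-\Delta^c f(x)=\Delta f(y)-\Delta f(x)$. Recombining the sums reconstructs $\Gamma_1(f,\Delta f)(x)$ and leaves the correction $-\tfrac{1}{2}\sum_{y\in S^1_p\sim x} f(y)(f(y)-f(x))$, which is the stated formula.

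For the case $x\sim p$, the argument is the same in spirit but book-keeping heavy, so I would isolate the contribution from $y=p$ first: since $f(p)=0$ and $\Delta^c f(p)=\sum_{y\in S^1_p} f(y)$, this single neighbor contributes $\tfrac{1}{2}(-f(x))\bigl(\sum_{y\in S^1_p} f(y)-\Delta f(x)+f(x)\bigr)$, accounting for the terms $-\tfrac12 f(x)\sum_{y\in S^1_p} f(y)+\tfrac12 f(x)\Delta f(x)-\tfrac12 f^2(x)$. For the remaining $V$-neighbors of $x$, the key identity is $\Delta^c f(y)-\Delta^c f(x)=\Delta f(y)-\Delta f(x)+f(x)$ on $S^2_p\sim x$ and $\Delta^c f(y)-\Delta^c f(x)=\Delta f(y)-\Delta f(x)+f(x)-f(y)$ on $S^1_p\sim x$; I then use $(f(y)-f(x))(f(x)-f(y))=-(f(y)-f(x))^2$ on the $S^1_p$ part and collect the $+f(x)$ addend as $\tfrac12 f(x)\sum_{y\in S^2_p\sim x}(f(y)-f(x))$ (the $S^1_p\sim x$ piece of this collection combines with the $p$-contribution to give another copy of $\tfrac12 f(x)\Delta f(x)$, which is how the lemma's $\tfrac12 f(x)\Delta f(x)$ coefficient appears rather than a full $f(x)\Delta f(x)$). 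I expect this bookkeeping — specifically, making sure the coefficient in front of $f(x)\Delta f(x)$ comes out to exactly $\tfrac12$ — to be the main source of error, so I will double-check that step by testing on a simple example such as a single edge.

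The case $x=p$ is the easiest: the cone-neighbors are exactly $S^1_p$, and $f(p)=0$, $\Delta^c f(p)=\sum_{z\in S^1_p} f(z)$, $\Delta^c f(y)=\Delta f(y)-f(y)$ for $y\in S^1_p$, so the sum expands in one line to the claimed expression. Overall, no new ideas are needed beyond Lemma~\ref{lem:cone-Delta}; the substance is purely algebraic and the only real obstacle is careful aggregation of terms in the $x\sim p$ case.
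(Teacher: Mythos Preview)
Your proposal is correct and follows essentially the same case-by-case computation as the paper: split the cone-neighbors of $x$ according to whether they lie in $S^1_p$, in $V\setminus S^1_p$, or equal $p$, substitute the values of $\Delta^c f$ from Lemma~\ref{lem:cone-Delta}, and regroup. Your parenthetical worry in the $x\sim p$ case is unfounded, though: once you write $(f(y)-f(x))(f(x)-f(y))=-(f(y)-f(x))^2$ on $S^1_p\sim x$ and pull out $\tfrac12 f(x)\sum_{y\in S^2_p\sim x}(f(y)-f(x))$ on $S^2_p\sim x$, the only $f(x)\Delta f(x)$ term comes from the $y=p$ contribution and already carries the coefficient $\tfrac12$, so no further recombination is needed.
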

%%%%%%%%%%%%%%%%%%%%%%%%%%%%%%%%%%%%%%%%%%%%%%%
%%%%%%%%%%%%%%%%%%%%%%%%%%%%%%%%%%%%%%%%%%%%%%%
\begin{proof}
\begin{enumerate}	
%%%%%%%%%%%%%%%%%%%%%%%%%%%%%%%%%%%%%%%%%%%%%%%
%%%%%%%%%%%%%%%%%%%%%%%%%%%%%%%%%%%%%%%%%%%%%%%
\item 	If $x\in S^2_p$, then using (\ref{eq:gradfg})
	%%%%%%%%%%%%%%%%%%%%%%%%%%%%%%%%%%%%%%%%%%%%%%%
\small	
	\begin{eqnarray*}
	\Gamma_1^c \bigl( f, \Delta^c f \bigr)(x) &=& \frac{1}{2}\sum_{y\in C\sim x} \bigl( f(y)-f(x) \bigr) \bigl( \Delta^c f(y)-\Delta^c f(x) \bigr)\\
	&=&\frac{1}{2}\sum_{y\in V \setminus S_p^1 \sim x} \bigl( f(y)-f(x) \bigr) \bigl( \Delta^c f(y)-\Delta^c f(x) \bigr) \\ 
	&& + \; \frac{1}{2}\sum_{y\in S^1_p\sim x}\bigl( f(y)-f(x) \bigr) \bigl( \Delta^c f(y)-\Delta^c f(x) \bigr)\\
	&=&\frac{1}{2}\sum_{y\in V \setminus S_p^1 \sim x}\bigl( f(y)-f(x) \bigr) \bigl( \Delta f(y)-\Delta f(x) \bigr) \\ 
	&&+ \; \frac{1}{2}\sum_{y\in S^1_p\sim x} \bigl( f(y)-f(x) \bigr) \bigl( \Delta f(y)-f(y)-\Delta f(x) \bigr)\\
	&=&\frac{1}{2}\sum_{y\in V\sim x} \bigl( f(y)-f(x) \bigr) \bigl( \Delta f(y)-\Delta f(x) \bigr)-\frac{1}{2}\sum_{y\in S^1_p\sim x}f(y)(f(y)-f(x))\\
	&=&\Gamma_1 \bigl( f,\Delta f \bigr)(x)-\frac{1}{2}\sum_{y\in S^1_p\sim x}f(y) \bigl( f(y)-f(x) \bigr) .
	\end{eqnarray*}
	\normalsize
%%%%%%%%%%%%%%%%%%%%%%%%%%%%%%%%%%%%%%%%%%%%%%%
%%%%%%%%%%%%%%%%%%%%%%%%%%%%%%%%%%%%%%%%%%%%%%%	
\item 	If $x\sim p$, then using (\ref{eq:gradfg})
	%%%%%%%%%%%%%%%%%%%%%%%%%%%%%%%%%%%%%%%%%%%%%%%
	\small
	\begin{eqnarray*}
	\Gamma_1^c \bigl( f,\Delta^c f \bigr) (x) &=&\frac{1}{2}\sum_{y\in C\sim x} \bigl( f(y)-f(x) \bigr) \bigl( \Delta^c f(y)-\Delta^c f(x) \bigr) \\
	&=&\frac{1}{2}\sum_{y\in S^2_p\sim x}\bigl ( f(y)-f(x) \bigr) \bigl( \Delta f(y)-\Delta f(x)+f(x) \bigr)\\
	&&+ \; \frac{1}{2}\sum_{y\in S^1_p\sim x} \bigl( f(y)-f(x) \bigr) \bigl( \Delta f(y)-f(y)-\Delta f(x)+f(x) \bigr)\\
	&&+ \; \frac{1}{2} \bigl( f(p)-f(x) \bigr) \bigl( \sum_{y\in S^1_p}f(y)-\Delta f(x)+f(x) \bigr)\\
	&=&\frac{1}{2}\sum_{y\in S^2_p\sim x} \bigl( f(y)-f(x) \bigr) \bigl( \Delta f(y)-\Delta f(x) \bigr) +\frac{1}{2}f(x)\sum_{y\in S^2_p\sim x} \bigl( f(y)-f(x) \bigr)\\
	&&+ \; \frac{1}{2}\sum_{y\in S^1_p\sim x}\bigl( f(y)-f(x) \bigr) \bigl( \Delta f(y)-\Delta f(x) \bigr) -\frac{1}{2}\sum_{y\in S^1_p\sim x} \bigl( f(y)-f(x) \bigr)^2\\
	&&- \; \frac{1}{2}f(x)\sum_{y\in S^1_p}f(y)+\frac{1}{2}f(x)\Delta f(x)-\frac{1}{2}f^2(x)\\
	&=&\frac{1}{2}\sum_{y\in V\sim x} \bigl( f(y)-f(x) \bigr) \bigl( \Delta f(y)-\Delta f(x) \bigr)-\frac{1}{2}\sum_{y\in S^1_p\sim x}\bigl( f(y)-f(x) \bigr)^2\\ &&+ \; \frac{1}{2}f(x)\sum_{y\in S^2_p\sim x}\bigl( f(y)-f(x) \bigr)-\frac{1}{2}f(x)\sum_{y\in S^1_p}f(y)+\frac{1}{2}f(x)\Delta f(x)-\frac{1}{2}f^2(x)\\
	&=&\Gamma_1 \bigl( f,\Delta f \bigr) (x)-\frac{1}{2}\sum_{y\in S^1_p\sim x} \bigl( f(y)-f(x) \bigr)^2 +\frac{1}{2}f(x)\sum_{y\in S^2_p\sim x} \bigl( f(y)-f(x) \bigr) \\ &&- \; \frac{1}{2}f(x)\sum_{y\in S^1_p}f(y)+\frac{1}{2}f(x)\Delta f(x)-\frac{1}{2}f^2(x).
	\end{eqnarray*}
\normalsize
%%%%%%%%%%%%%%%%%%%%%%%%%%%%%%%%%%%%%%%%%%%%%%%
%%%%%%%%%%%%%%%%%%%%%%%%%%%%%%%%%%%%%%%%%%%%%%%
\item  If $x=p$, then using (\ref{eq:gradfg})
	%%%%%%%%%%%%%%%%%%%%%%%%%%%%%%%%%%%%%%%%%%%%%%%
	\small
	\begin{eqnarray*}
	\Gamma_1^c \bigl( f,\Delta^c f \bigr) (p) &=&\frac{1}{2}\sum_{y\in C\sim p} \bigl( f(y)-f(p) \bigr) \bigl( \Delta^cf(y)-\Delta^cf(p) \bigr)\\
	&=&\frac{1}{2}\sum_{y\in S^1_p}\bigl( f(y)-f(p) \bigr) \bigl( \Delta f(y)-f(y)-\sum_{z\in S^1_p}f(z) \bigr)\\
	&=&\frac{1}{2}\sum_{y\in S^1_p}\biggl [f(y)\Delta f(y)-f^2(y)-f(y)\sum_{z\in S^1_p}f(z) \biggr ]\\
	&=&\frac{1}{2}\sum_{y\in S^1_p}f(y)\Delta f(y)-\frac{1}{2}\sum_{y\in S^1_p}f^2(y)-\frac{1}{2} \bigl( \sum_{y\in S^1_p}f(y) \bigl)^2.
	\end{eqnarray*}
\normalsize
\end{enumerate}
\end{proof}
%%%%%%%%%%%%%%%%%%%%%%%%%%%%%%%%%%%%%%%%%%%%%%%
%%%%%%%%%%%%%%%%%%%%%%%%%%%%%%%%%%%%%%%%%%%%%%%
\begin{lemma}
Let $f$ be a function defined on the cone, and suppose $f(p) = 0$, then
%%%%%%%%%%%%%%%%%%%%%%%%%%%%%%%%%%%%%%%%%%%%%%%
\small
\be
\Delta^c\Gamma_1^c(f)(x)=\begin{cases}
	\Delta\Gamma_1(f)(x)+\frac{1}{2}\sum_{y\in S^1_p\sim x}f^2(y); & x\in S_p^2\\
	\Delta\Gamma_1(f)(x)-\Gamma_1(f)(x)\\
	\qquad\qquad\quad+\frac{1}{2}\biggl[\sum_{y\in S^1_p\sim x}f^2(y)+\sum_{y\in S^1_p}f^2(y)\biggr]\\
	\qquad\qquad\quad-\frac{1}{2}\deg(x)f^2(x) & x\sim p\\
	\sum_{y\in S^1_p}\Gamma_1(f)(y)-\frac{\lvert S^1_p\rvert-1}{2}\sum_{y\in S^1_p}f^2(y); & x=p
	\end{cases} \notag
\ee
\normalsize
\end{lemma}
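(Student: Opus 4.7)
The plan is to unfold $\Delta^c\Gamma_1^c(f)$ directly from the definition $\Delta^c g(x) = \sum_{y\in C\sim x}\bigl(g(y)-g(x)\bigr)$ applied to $g=\Gamma_1^c(f)$, and then substitute the piecewise expression for $\Gamma_1^c(f)$ already established in Lemma~\ref{lem:cone-Gamma1}. The work is purely bookkeeping: for each of the three regimes ($x\in S_p^2$, $x\sim p$, $x=p$) I need to track which cone-neighbors of $x$ lie in $S_p^1$ (where $\Gamma_1^c$ picks up an extra $\tfrac12 f^2$), which lie strictly outside $B_p^1$ (where $\Gamma_1^c=\Gamma_1$), and whether $p$ itself is a neighbor (contributing the cone-point value $\tfrac12\sum_{y\in S_p^1}f^2(y)$).

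First, for $x\in S_p^2$ I would use the fact that $x\not\sim p$, so $\Gamma_1^c(f)(x)=\Gamma_1(f)(x)$ and the cone-neighbors of $x$ coincide with its $V$-neighbors. Splitting $V\sim x$ as $(S_p^1\sim x)\sqcup(V\setminus S_p^1\sim x)$ and adding and subtracting $\Gamma_1(f)(x)$ appropriately, the non-$S_p^1$ neighbors assemble into $\Delta\Gamma_1(f)(x)$ while each $y\in S_p^1\sim x$ contributes an extra $\tfrac12 f^2(y)$.

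For $x\sim p$ the calculation is the most involved. Here $\Gamma_1^c(f)(x)=\Gamma_1(f)(x)+\tfrac12 f^2(x)$ and the cone-neighbors of $x$ are $(V\sim x)\cup\{p\}$, with $\deg^c(x)=\deg(x)+1$. I would write
\[
\Delta^c\Gamma_1^c(f)(x)=\sum_{y\in V\setminus S_p^1\sim x}\Gamma_1(f)(y)+\sum_{y\in S_p^1\sim x}\bigl[\Gamma_1(f)(y)+\tfrac12 f^2(y)\bigr]+\tfrac12\sum_{z\in S_p^1}f^2(z)-(\deg(x)+1)\bigl[\Gamma_1(f)(x)+\tfrac12 f^2(x)\bigr],
\]
then reassemble the first two sums as $\sum_{y\in V\sim x}\Gamma_1(f)(y)-\deg(x)\Gamma_1(f)(x)=\Delta\Gamma_1(f)(x)$, leaving the leftover $-\Gamma_1(f)(x)$ plus the $f^2$ collected terms. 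For $x=p$, the cone-neighbors are exactly $S_p^1$, so $\Delta^c\Gamma_1^c(f)(p)=\sum_{y\in S_p^1}[\Gamma_1(f)(y)+\tfrac12 f^2(y)]-|S_p^1|\cdot\tfrac12\sum_{y\in S_p^1}f^2(y)$, which collapses to the stated formula after combining the two $f^2$ sums with the coefficient $\tfrac{1-|S_p^1|}{2}$.

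The only real obstacle is tidy indexing: one must be careful that $S_p^1\sim x$ omits $x$ itself (which is automatic in a loop-free graph but matters when $x\in S_p^1$), and that the $\deg$ appearing in the statement is read consistently throughout (every $\Gamma_1^c(f)(x)$ subtraction produces both a $-\Gamma_1(f)(x)$ piece and a $-\tfrac12 f^2(x)$ piece, multiplied by the cone-degree). Once these accountings are done, no deeper machinery than Lemma~\ref{lem:cone-Gamma1} and the definition \eqref{lap} of $\Delta$ is needed.
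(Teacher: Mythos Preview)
Your proposal is correct and follows essentially the same route as the paper: in each regime you expand $\Delta^c\Gamma_1^c(f)(x)=\sum_{y\in C\sim x}\bigl(\Gamma_1^c(f)(y)-\Gamma_1^c(f)(x)\bigr)$, partition the cone-neighbors of $x$ according to whether they lie in $S_p^1$, in $V\setminus S_p^1$, or equal $p$, substitute the piecewise values from Lemma~\ref{lem:cone-Gamma1}, and regroup to isolate $\Delta\Gamma_1(f)(x)$. Your flag about reading $\deg(x)$ consistently is apt, since the $-\tfrac12 f^2(x)$ contributions from all $\deg_G(x)+1$ cone-neighbors must be collected into the single $-\tfrac12\deg(x)f^2(x)$ term of the statement.
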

%%%%%%%%%%%%%%%%%%%%%%%%%%%%%%%%%%%%%%%%%%%%%%%
%%%%%%%%%%%%%%%%%%%%%%%%%%%%%%%%%%%%%%%%%%%%%%%
\begin{proof}
\begin{enumerate}	
%%%%%%%%%%%%%%%%%%%%%%%%%%%%%%%%%%%%%%%%%%%%%%%
%%%%%%%%%%%%%%%%%%%%%%%%%%%%%%%%%%%%%%%%%%%%%%%	
\item If $x\in S^2_p$, then
%%%%%%%%%%%%%%%%%%%%%%%%%%%%%%%%%%%%%%%%%%%%%%%
\small
	\begin{eqnarray*}
	\Delta^c\Gamma_1^c(f)(x) &=&\sum_{y\in C\sim x} \biggl[ \Gamma_1^c(f)(y)-\Gamma_1^c(f)(x) \biggr]\\
	&=&\sum_{y\in V \setminus S_p^1 \sim x} \biggl[ \Gamma_1^c(f)(y)-\Gamma_1^c(f)(x) \biggr]+\sum_{y\in S^1_p\sim x} \biggl[ \Gamma_1^c(f)(y)-\Gamma_1^c(f)(x) \biggr]\\
	&=&\sum_{y\in V \setminus S_p^1 \sim x} \biggl[ \Gamma_1(f)(y)-\Gamma_1(f)(x) \biggr] +\sum_{y\in S^1_p\sim x} \biggl[ \Gamma_1(f)(y)+\frac{1}{2}f^2(y)-\Gamma_1(f)(x) \biggr]\\
	&=&\sum_{y\in V\sim x} \biggl[ \Gamma_1(f)(y)-\Gamma_1(f)(x) \biggr] + \frac{1}{2}\sum_{y\in S^1_p\sim x}f^2(y)\\
	&=&\Delta\Gamma_1(f)(x)+\frac{1}{2}\sum_{y\in S^1_p\sim x}f^2(y).
	\end{eqnarray*}
	\normalsize
%%%%%%%%%%%%%%%%%%%%%%%%%%%%%%%%%%%%%%%%%%%%%%%
%%%%%%%%%%%%%%%%%%%%%%%%%%%%%%%%%%%%%%%%%%%%%%%	
\item 	 If $x\sim p$, then
%%%%%%%%%%%%%%%%%%%%%%%%%%%%%%%%%%%%%%%%%%%%%%%
\small
	\begin{eqnarray*}
	\Delta^c\Gamma_1^c(f)(x) &=&\sum_{y\in C\sim x} \biggl[ \Gamma_1^c(f)(y)-\Gamma_1^c(f)(x) \biggr] \\
	&=&\sum_{y\in S^2_p\sim x} \biggl[ \Gamma_1(f)(y)-\Gamma_1(f)(x)-\frac{1}{2}f^2(x) \biggr]
	\\
	&&+ \; \sum_{y\in S^1_p\sim x} \biggl[ \Gamma_1(f)(y)-\Gamma_1(f)(x)+\frac{1}{2} \bigl( f^2(y)-f^2(x) \bigr) \biggr] + \Gamma_1^c(f)(p)-\Gamma_1(f)(x)-\frac{1}{2}f^2(x)\\
	&=&\sum_{y\in S^2_p\sim x} \biggl[ \Gamma_1(f)(y)-\Gamma_1(f)(x) \biggr]-\frac{1}{2}\deg(x) f^2(x)
	+\sum_{y\in S^1_p\sim x} \biggl[ \Gamma_1(f)(y)-\Gamma_1(f)(x) \biggr]\\
	&&+ \; \frac{1}{2}\sum_{y\in S^1_p\sim x} f^2(y)+\frac{1}{2}\sum_{y\in S^1_p}f^2(y)-\Gamma_1(f)(x)\\
	&=&\sum_{y\in V\sim x} \biggl[ \Gamma_1(f)(y)-\Gamma_1(f)(x) \biggr] - \Gamma_1(f)(x)\\
	&&+ \; \frac{1}{2}\sum_{y\in S^1_p\sim x} f^2(y) + \frac{1}{2}\sum_{y\in S^1_p}f^2(y)-\frac{1}{2} \deg(x)f^2(x)\\
	&=&\Delta\Gamma_1(f)(x)-\Gamma_1(f)(x)+\frac{1}{2}\biggl[\sum_{y\in S^1_p\sim x}f^2(y)+\sum_{y\in S^1_p}f^2(y)\biggr]
	-\frac{1}{2}\deg(x)f^2(x).
	\end{eqnarray*}
	\normalsize
%%%%%%%%%%%%%%%%%%%%%%%%%%%%%%%%%%%%%%%%%%%%%%%
%%%%%%%%%%%%%%%%%%%%%%%%%%%%%%%%%%%%%%%%%%%%%%%	
\item	 If $x=p$, then
%%%%%%%%%%%%%%%%%%%%%%%%%%%%%%%%%%%%%%%%%%%%%%%
\small
	\begin{eqnarray*}
	\Delta^c \bigl( \Gamma_1^c(f) \bigr) (p) &=&\sum_{y\in S^1_p} \biggl[ \Gamma_1^c(f)(y)-\Gamma_1^c(f)(p) \biggr]\\
	&=&\sum_{y\in S^1_p} \biggl[ \Gamma_1(f)(y)+\frac{1}{2}f^2(y)-\frac{1}{2}\sum_{y\in S^1_p}f^2(y) \biggr]\\
	&=&\sum_{y\in S^1_p}\Gamma_1(f)(y)+\frac{1}{2}\sum_{y\in S^1_p}f^2(y)-\frac{\lvert S^1_p\rvert}{2}\sum_{y\in S^1_p}f^2(y)\\
	&=&\sum_{y\in S^1_p}\Gamma_1(f)(y)-\frac{(\lvert S^1_p\rvert-1)}{2}\sum_{y\in S^1_p}f^2(y).
	\end{eqnarray*}
\normalsize
\end{enumerate}
\end{proof}
%%%%%%%%%%%%%%%%%%%%%%%%%%%%%%%%%%%%%%%%%%%%%%%
%%%%%%%%%%%%%%%%%%%%%%%%%%%%%%%%%%%%%%%%%%%%%%%	
\begin{lemma}\label{lem:cone-Gamma2}
Let $f$ be a function defined on the cone, and suppose $f(p) = 0$, then
%%%%%%%%%%%%%%%%%%%%%%%%%%%%%%%%%%%%%%%%%%%%%%%
\small
\be
\Gamma_2^c(f)(x)=\begin{cases}
	\Gamma_2(f)(x)+\frac{3}{4}\sum_{y\in S^1_p\sim x}f^2(y)-\frac{1}{2}f(x)\sum_{y\in S^1_p\sim x}f(y); & x\in S_p^2\\
	\Gamma_2(f)(x)- \frac{1}{2}  \Gamma_1(f)(x)+\frac{1}{2}\sum_{y\in S^1_p\sim x} \bigl( f(y)-f(x) \bigr)^2\\
	    \qquad\qquad+\frac{1}{4}\biggl[\sum_{y\in S^1_p\sim x}f^2(y)-\deg(x)f^2(x)\biggr]-\frac{1}{2}f(x)\Delta f(x)\\
	    \qquad\qquad-\frac{1}{2}f(x)\sum_{y\in S^2_p\sim x}(f(y)-f(x))+\frac{1}{4}\sum_{y\in S^1_p}f^2(y)+\frac{1}{2} f^2(x)  ; & x\sim p\\
	\frac{1}{2}\sum_{y\in S^1_p}\Gamma_1(f)(y)-\frac{1}{2}\sum_{y\in S^1_p}f(y)\Delta f(y)\\
	    \qquad\qquad -\frac{\vert S^1_p\rvert-3}{4}\sum_{y\in S^1_p}f^2(y)+\frac{1}{2} \bigl( \sum_{y\in S^1_p}f(y) \bigr )^2; & x=p
	\end{cases} \notag
	\ee
	\normalsize
\end{lemma}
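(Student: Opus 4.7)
The plan is to apply the polarized definition of $\Gamma_2^c$ from (\ref{gamma2}) to the diagonal $g = f$, yielding the identity
\[
\Gamma_2^c(f)(x) = \tfrac{1}{2}\Delta^c\Gamma_1^c(f)(x) - \Gamma_1^c(f, \Delta^c f)(x).
\]
I would then substitute the three-case formulas for $\Delta^c\Gamma_1^c(f)(x)$ and $\Gamma_1^c(f, \Delta^c f)(x)$ just established in the two preceding lemmas, continuing to use the normalization $f(p) = 0$. The analogous identity $\Gamma_2(f)(x) = \tfrac{1}{2}\Delta\Gamma_1(f)(x) - \Gamma_1(f, \Delta f)(x)$ on the base graph $G$ automatically consolidates the ``non-conical'' portions into $\Gamma_2(f)(x)$ in each of the three regimes, so the only work left is to track how the conical correction terms of each input lemma combine under the weights $\tfrac{1}{2}$ and $-1$.

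For $x \in S_p^2$, the substitution is essentially immediate: the only conical residues come from the $\tfrac{1}{2}\sum_{y\in S^1_p\sim x} f^2(y)$ term inside $\Delta^c\Gamma_1^c(f)(x)$ and the $-\tfrac{1}{2}\sum_{y\in S^1_p\sim x} f(y)\bigl(f(y) - f(x)\bigr)$ term inside $\Gamma_1^c(f, \Delta^c f)(x)$. After applying the prefactors, the coefficients of $f^2(y)$ on $S^1_p\sim x$ add as $\tfrac{1}{4}+\tfrac{1}{2}=\tfrac{3}{4}$, and the cross term $-\tfrac{1}{2}f(x)\sum_{y\in S^1_p\sim x}f(y)$ is the only remaining residue, matching the claim. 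For $x = p$, the two input expressions combine directly: the $f^2(y)$-coefficients merge as $-\tfrac{\lvert S^1_p\rvert - 1}{4}+\tfrac{1}{2}=-\tfrac{\lvert S^1_p\rvert - 3}{4}$, while the $(\sum_{y\in S^1_p} f(y))^2$ contribution survives with sign $+\tfrac{1}{2}$ inherited from $-\Gamma_1^c(f,\Delta^c f)(p)$, and the sums $\sum_{y\in S^1_p}\Gamma_1(f)(y)$ and $\sum_{y\in S^1_p}f(y)\Delta f(y)$ pass through with the prescribed $\pm\tfrac{1}{2}$ prefactors.

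The main (and essentially the only) obstacle is the bookkeeping in the case $x \sim p$, where upwards of a dozen distinct contributions, spread over the three sphere-sums $\sum_{y\in S^1_p}$, $\sum_{y\in S^1_p\sim x}$, $\sum_{y\in S^2_p\sim x}$, together with degree-correction terms $\deg(x) f^2(x)$, cross terms $f(x)\Delta f(x)$, $f(x)\sum_{y\in S^1_p}f(y)$ and pure $f^2(x)$, must all be tallied with the correct sign after being weighted by $\tfrac{1}{2}$ (from $\Delta^c\Gamma_1^c$) and $-1$ (from $-\Gamma_1^c(f,\Delta^c f)$). I would carry out this step by tabulating the output of each input lemma by term-type, applying the prefactors, and collecting; the $-\tfrac{1}{2}\Gamma_1(f)(x)$ piece inherits its sign from the $-\Gamma_1(f)(x)$ inside $\Delta^c\Gamma_1^c(f)(x)$, while the $+\tfrac{1}{2}\sum_{y\in S^1_p\sim x}(f(y)-f(x))^2$ comes straight from the $-\Gamma_1^c(f,\Delta^c f)(x)$ side. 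After cancellation of pairs of like terms, the surviving contributions assemble into exactly the piecewise formula claimed in the statement.
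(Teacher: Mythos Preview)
Your proposal is correct and follows essentially the same route as the paper: both use $\Gamma_2^c(f)(x)=\tfrac{1}{2}\Delta^c\Gamma_1^c(f)(x)-\Gamma_1^c(f,\Delta^c f)(x)$, substitute the formulas from the two preceding lemmas in each of the three regimes, and collect the conical correction terms against the base-graph identity $\Gamma_2(f)=\tfrac{1}{2}\Delta\Gamma_1(f)-\Gamma_1(f,\Delta f)$. Your coefficient checks for $x\in S_p^2$ and $x=p$ match the paper's exactly, and your description of the $x\sim p$ bookkeeping is an accurate summary of what the paper carries out line by line.
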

%%%%%%%%%%%%%%%%%%%%%%%%%%%%%%%%%%%%%%%%%%%%%%%
%%%%%%%%%%%%%%%%%%%%%%%%%%%%%%%%%%%%%%%%%%%%%%%	
\begin{proof}
\begin{enumerate}
%%%%%%%%%%%%%%%%%%%%%%%%%%%%%%%%%%%%%%%%%%%%%%%
%%%%%%%%%%%%%%%%%%%%%%%%%%%%%%%%%%%%%%%%%%%%%%%
\item  If $x\in S^2_p$, then
%%%%%%%%%%%%%%%%%%%%%%%%%%%%%%%%%%%%%%%%%%%%%%%
\small	
	\begin{eqnarray*}
	\Gamma_2^c(f)(x) &=&\frac{1}{2}\Delta^c\Gamma_1^c(f,f)(x)-\Gamma_1^c \bigl( f,\Delta^c f \bigr)(x)\\
	&=&\frac{1}{2}\Delta\Gamma_1(f)(x)+\frac{1}{4}\sum_{y\in S^1_p\sim x}f^2(y)-\Gamma_1\bigl( f,\Delta f \bigr)(x)+\frac{1}{2}\sum_{y\in S^1_p\sim x}f(y) \bigl( f(y)-f(x) \bigr)\\
	&=&\Gamma_2(f)(x)+\frac{3}{4}\sum_{y\in S^1_p\sim x}f^2(y)-\frac{1}{2}f(x)\sum_{y\in S^1_p\sim x}f(y).
	\end{eqnarray*}
	\normalsize
%%%%%%%%%%%%%%%%%%%%%%%%%%%%%%%%%%%%%%%%%%%%%%%
%%%%%%%%%%%%%%%%%%%%%%%%%%%%%%%%%%%%%%%%%%%%%%%
\item If $x\sim p$, then
 %%%%%%%%%%%%%%%%%%%%%%%%%%%%%%%%%%%%%%%%%%%%%%%
 \small
	\begin{eqnarray*}
		\Gamma_2^c(f)(x) &=&\frac{1}{2}\Delta^c\Gamma_1^c(f)(x)-\Gamma_1^c\bigl( f,\Delta^c f \bigr)(x)\\
	&=&\frac{1}{2}\Delta\Gamma_1(f)(x)-\frac{1}{2}\Gamma_1(f)(x)+\frac{1}{4}\sum_{y\in S^1_p\sim x} f^2(y)+\frac{1}{4}\sum_{y\in S^1_p}f^2(y)-\frac{1}{4}\deg(x)f^2(x)\\
	&&- \; \Gamma_1\bigl( f,\Delta f \bigr)(x)+\frac{1}{2}\sum_{y\in S^1_p\sim x} \bigl( f(y)-f(x) \bigr)^2  -\frac{1}{2}f(x)\sum_{y\in S^2_p \sim x} \bigl( f(y)-f(x) \bigr)\\
	&&+ \; \frac{1}{2}f(x)\sum_{y\in S^1_p}f(y) -\frac{1}{2}f(x)\Delta f(x)+\frac{1}{2}f^2(x).\\
	&=&  \biggl[ \frac{1}{2}\Delta\Gamma_1(f)(x)-\Gamma_1 \bigl( f,\Delta f \bigr)(x)  \biggr] - \frac{1}{2}  \Gamma_1(f)(x)+\frac{1}{2}\sum_{y\in S^1_p\sim x} \bigl( f(y)-f(x) \bigr)^2  \\
	&&+ \; \frac{1}{4}\biggl[\sum_{y\in S^1_p\sim x}f^2(y)-\deg(x)f^2(x)\biggr]+\frac{1}{4}\sum_{y\in S^1_p}f^2(y)-\frac{1}{2}f(x)\sum_{y\in S^2_p\sim x}(f(y)-f(x))\\
	&&- \; \frac{1}{2}f(x)\Delta f(x)+\frac{1}{2} f^2(x)\\
	&=&\Gamma_2(f)(x)- \frac{1}{2}  \Gamma_1(f)(x)+\frac{1}{2}\sum_{y\in S^1_p\sim x} \bigl( f(y)-f(x) \bigr)^2-\frac{1}{2}f(x)\sum_{y\in S^2_p\sim x}(f(y)-f(x))  \\
	&&+ \; \frac{1}{4}\biggl[\sum_{y\in S^1_p\sim x}f^2(y)-\deg(x)f^2(x)\biggr]-\frac{1}{2}f(x)\Delta f(x)+\frac{1}{4}\sum_{y\in S^1_p}f^2(y)+\frac{1}{2} f^2(x)
	\end{eqnarray*}
	\normalsize
\item  If $x=p$, then
%%%%%%%%%%%%%%%%%%%%%%%%%%%%%%%%%%%%%%%%%%%%%%%
\small
	\begin{eqnarray*}
	\Gamma_2^c(f)(p)&=&\frac{1}{2}\Delta^c\Gamma_1^c(f)(p)-\Gamma_1^c \bigl( f,\Delta^c f \bigr)(p)\\
	&=&\frac{1}{2}\sum_{y\in S^1_p}\Gamma_1(f)(y)-\frac{\lvert S^1_p\rvert-1}{4}\sum_{y\in S^1_p}f^2(y)-\frac{1}{2}\sum_{y\in S^1_p}f(y)\Delta f(y)+\frac{1}{2}\sum_{y\in S^1_p}f^2(y)+\frac{1}{2}  \bigl(\sum_{y\in S^1_p}f(y) \bigr)^2\\
	&=&\frac{1}{2}\sum_{y\in S^1_p}\Gamma_1(f)(y)-\frac{1}{2}\sum_{y\in S^1_p}f(y)\Delta f(y)-\frac{\lvert S^1_p\rvert-3}{4}\sum_{y\in S^1_p}f^2(y)+\frac{1}{2}\bigl(\sum_{y\in S^1_p}f(y)\bigr )^2 . 
	\end{eqnarray*}
	\normalsize
\end{enumerate}	
\end{proof}
%%%%%%%%%%%%%%%%%%%%%%%%%%%%%%%%%%%%%%%%%%%%%%%
%%%%%%%%%%%%%%%%%%%%%%%%%%%%%%%%%%%%%%%%%%%%%%%
\subsection{$\Gamma^c_2$ for $C(G)$}

When $C=(V^c,E^c)$ is the full cone over $V(G)$, then $S^1_p=V$ and so Lemma~\ref{lem:cone-Gamma2} reduces to
%%%%%%%%%%%%%%%%%%%%%%%%%%%%%%%%%%%%%%%%%%%%%%%
%%%%%%%%%%%%%%%%%%%%%%%%%%%%%%%%%%%%%%%%%%%%%%%
\begin{lemma}\label{lem:cone-Gamma3}
%%%%%%%%%%%%%%%%%%%%%%%%%%%%%%%%%%%%%%%%%%%%%%%
\small
\be
\Gamma_2^c(f)(x)=\begin{cases}
	\Gamma_2(f)(x)+\frac{1}{2}\Gamma_1(f)(x)+\frac{1}{4}\sum_{y\in V}f^2(y)+\frac{1}{2}f^2(x); & x\sim p\\
	\sum_{y\in V}\Gamma_1(f)(y)-\frac{\lvert V\rvert-3}{4}\sum_{y\in V}f^2(y)+\frac{1}{2} \bigl( \sum_{y\in V}f(y) \bigl )^2; & x=p\\
	\end{cases} \notag.
	\ee
	\normalsize
\end{lemma}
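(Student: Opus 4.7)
The plan is to derive both formulas as direct specializations of Lemma~\ref{lem:cone-Gamma2} to the full cone $C(G)$. The defining observation is that every vertex of $V(G)$ is adjacent to the cone point $p$, so $S_p^1 = V$ and $S_p^2 = \varnothing$. Consequently, every sum indexed over $S_p^2 \sim x$ drops out, every sum over $y \in S_p^1$ becomes a sum over $y \in V$, and every sum over $y \in S_p^1 \sim x$ becomes a sum over the graph neighbors $y \sim x$ of $x$ in the base graph $G$.

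For $x \sim p$ I would substitute these identifications directly into the second case of Lemma~\ref{lem:cone-Gamma2}. The term $\tfrac{1}{2}\sum_{y\in S_p^1\sim x}(f(y)-f(x))^2$ immediately collapses to $\Gamma_1(f)(x)$, and the three remaining $x$-local pieces $\tfrac{1}{4}\sum_{y\sim x}f^2(y)$, $-\tfrac{1}{4}\deg(x)f^2(x)$, and $-\tfrac{1}{2}f(x)\Delta f(x)$ combine by completing the square:
\small
\[
\tfrac{1}{4}\sum_{y\sim x}f^2(y) - \tfrac{1}{4}\deg(x)f^2(x) - \tfrac{1}{2}f(x)\Delta f(x) = \tfrac{1}{4}\sum_{y\sim x}\bigl(f(y)-f(x)\bigr)^2 = \tfrac{1}{2}\Gamma_1(f)(x).
\]
\normalsize
Absorbing this back into the leftover pieces from Lemma~\ref{lem:cone-Gamma2} gives the stated coefficient in front of $\Gamma_1(f)(x)$, while $\tfrac{1}{4}\sum_{y\in V}f^2(y)$ and $\tfrac{1}{2}f^2(x)$ transfer over unchanged.

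For the cone point $x = p$, substituting $|S_p^1| = |V|$ into the third case of Lemma~\ref{lem:cone-Gamma2} produces
\small
\[
\Gamma_2^c(f)(p) = \tfrac{1}{2}\sum_{y\in V}\Gamma_1(f)(y) - \tfrac{1}{2}\sum_{y\in V}f(y)\Delta f(y) - \tfrac{|V|-3}{4}\sum_{y\in V}f^2(y) + \tfrac{1}{2}\Bigl(\textstyle\sum_{y\in V}f(y)\Bigr)^2.
\]
\normalsize
The divergence identity~\eqref{eq:divergence} converts $-\tfrac{1}{2}\sum_{y\in V}f(y)\Delta f(y)$ into $\tfrac{1}{2}\sum_{y\in V}\Gamma_1(f)(y)$, which combines with the first term to yield the full $\sum_{y\in V}\Gamma_1(f)(y)$ appearing in the statement; the other terms carry over verbatim.

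The main obstacle is purely bookkeeping: the $x \sim p$ case of Lemma~\ref{lem:cone-Gamma2} contains several degree-dependent and neighborhood-indexed terms, and the completing-the-square identity displayed above is the one piece of non-trivial algebra that makes everything collapse to the stated closed form. No additional ideas beyond Lemma~\ref{lem:cone-Gamma2} and the divergence identity~\eqref{eq:divergence} are needed.
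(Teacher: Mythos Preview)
Your proposal is correct and follows essentially the same route as the paper's proof: both specialize Lemma~\ref{lem:cone-Gamma2} via $S^1_p=V$, $S^2_p=\varnothing$, and for $x\sim p$ invoke the identity $\tfrac{1}{4}\Delta(f^2)(x)-\tfrac{1}{2}f(x)\Delta f(x)=\tfrac{1}{2}\Gamma_1(f)(x)$ (which is precisely your completing-the-square step written out), while for $x=p$ both appeal to the divergence identity~\eqref{eq:divergence}.
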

%%%%%%%%%%%%%%%%%%%%%%%%%%%%%%%%%%%%%%%%%%%%%%%
%%%%%%%%%%%%%%%%%%%%%%%%%%%%%%%%%%%%%%%%%%%%%%%
\begin{proof}
	Since $S^1_p=V$ and $S^2_p=\varnothing$ the first case in Lemma~\ref{lem:cone-Gamma2} disappears. In case 2 notice that when $S^1_p=V$, then $\frac{1}{2}\sum_{y\in S^1_p\sim x} \bigl( f(y)-f(x) \bigr)^2=\Gamma_1(f)(x)$ and $\sum_{y\in S^1_p\sim x} \bigl( f^2(y)-f^2(x) \bigr)= \bigl( \Delta f^2 \bigr)(x)$. Since $\frac{1}{2}\Gamma_1(f)(x)= \frac{1}{4} \bigl( \Delta f^2 \bigr)(x)-\frac{1}{2}f(x)\Delta f(x)$ the case when $x\sim p$ follows. When $x=p$, applying the identity (\ref{eq:divergence}) gives the desired result.
\end{proof}
%%%%%%%%%%%%%%%%%%%%%%%%%%%%%%%%%%%%%%%%%%%%%%%
%%%%%%%%%%%%%%%%%%%%%%%%%%%%%%%%%%%%%%%%%%%%%%%
This leads to the following result regarding the curvature of the cone,  
%%%%%%%%%%%%%%%%%%%%%%%%%%%%%%%%%%%%%%%%%%%%%%%
%%%%%%%%%%%%%%%%%%%%%%%%%%%%%%%%%%%%%%%%%%%%%%%
\begin{customthm}{\ref{thm:main-5}}
    Suppose $G$ satisfies $CD(K,\infty)$ for $K \le \frac{1}{2}$ then the subgraph $G\subset C(G)$ satisfies $CD(K+\frac{1}{2},\infty)$.
\end{customthm}
\begin{proof}[Proof of Theorem~\ref{thm:main-5}]
Suppose $G$ satisfies $CD(K,\infty)$ for $K \le \frac{1}{2}$. Since $G$ satisfies $CD(K,\infty)$ then by lemma~\ref{lem:cone-Gamma3} for $x\sim p$,
    \small
    \be
    \Gamma_2^c(f)(x)\geq (K+1)\Gamma_1(f)(x)+\frac{1}{4}\sum_{y\in V}f^2(y)+\frac{1}{2}f^2(x). \notag
    \ee
    \normalsize
    Therefore,
    \small
    \be
    \Gamma_2^c(f)(x)\geq (K+1)\Gamma^c_1(f)(x)+\frac{1}{4}\sum_{y\in V}f^2(y)-\frac{K}{2}f^2(x). \notag
	\ee
    \normalsize
Since $K \le \frac{1}{2}$, then $\frac{1}{4}\sum_{y\in V}f^2(y)-\frac{K}{2}f^2(x)\geq0$. Hence we may drop both terms from the inequality and $C(G)$ satisfies $CD(K+1,\infty)$ for $x\sim p$.
\end{proof}
%%%%%%%%%%%%%%%%%%%%%%%%%%%%%%%%%%%%%%%%%%%%%%%
%%%%%%%%%%%%%%%%%%%%%%%%%%%%%%%%%%%%%%%%%%%%%%%
%%%%%%%%%%%%%%%%%%%%%%%%%%%%%%%%%%%%%%%%%%%%%%%
%%%%%%%%%%%%%%%%%%%%%%%%%%%%%%%%%%%%%%%%%%%%%%%
%%%%%%%%%%%%%%%%%%%%%%%%%%%%%%%%%%%%%%%%%%%%%%%
%%%%%%%%%%%%%%%%%%%%%%%%%%%%%%%%%%%%%%%%%%%%%%%
% Section: CCD CURVATURE DIMENSION CONDITION
%%%%%%%%%%%%%%%%%%%%%%%%%%%%%%%%%%%%%%%%%%%%%%%
%%%%%%%%%%%%%%%%%%%%%%%%%%%%%%%%%%%%%%%%%%%%%%%
%%%%%%%%%%%%%%%%%%%%%%%%%%%%%%%%%%%%%%%%%%%%%%%
%%%%%%%%%%%%%%%%%%%%%%%%%%%%%%%%%%%%%%%%%%%%%%%
%%%%%%%%%%%%%%%%%%%%%%%%%%%%%%%%%%%%%%%%%%%%%%%
%%%%%%%%%%%%%%%%%%%%%%%%%%%%%%%%%%%%%%%%%%%%%%%
\section{$CCD(K,N)$ and Global Poincar\'e Inequality }
%%%%%%%%%%%%%%%%%%%%%%%%%%%%%%%%%%%%%%%%%%%%%%%
%%%%%%%%%%%%%%%%%%%%%%%%%%%%%%%%%%%%%%%%%%%%%%%
\par If the cone $C$ satisfies the $CD(K,N)$ inequality at the vertex, $p$, then by Lemmas~\ref{lem:cone-Gamma1} and~\ref{lem:cone-Delta}, we get
%%%%%%%%%%%%%%%%%%%%%%%%%%%%%%%%%%%%%%%%%%%%%%%
%%%%%%%%%%%%%%%%%%%%%%%%%%%%%%%%%%%%%%%%%%%%%%%
    \small
    \be\label{eq:cd-vertex-1}
    \Gamma_2^c(f)(p)\geq\frac{1}{N}\bigl (\sum_{y\in V}f(y) \bigr )^2+\frac{K}{2}\sum_{y\in V}f^2(y). 
    \ee
    \normalsize
This leads to the following,
%%%%%%%%%%%%%%%%%%%%%%%%%%%%%%%%%%%%%%%%%%%%%%%
\begin{customthm}{\ref{thm:main-1}}
	
	If a graph, $G$, satisfies $CCD(K,N)$ curvature-dimension condition, then for any function $f$ on $G$ one has
%%%%%%%%%%%%%%%%%%%%%%%%%%%%%%%%%%%%%%%%%%%%%%%	
	\small
	\be\label{eq:poincare}
	\sum_{y\in V}\Gamma_1(f)(y)\geq\frac{2-N}{2N}\biggl ( \sum_{y\in V}f(y)\biggr )^2+\frac{2K+\lvert V\rvert-3}{4}\sum_{y\in V}f^2(y).
	\ee
\normalsize	
For functions $f$ with $avg(f)=0$, this reduces to the following global Poincar\'e inequality,
%%%%%%%%%%%%%%%%%%%%%%%%%%%%%%%%%%%%%%%%%%%%%%%
	\small
	\be
	\lVert f\rVert_2\leq\sqrt{\frac{2}{2K+\lvert V\rvert-3}}\lVert\nabla f\rVert_2, \notag
	\ee
	\normalsize
    where $\lVert\nabla f\rVert_2$ is understood in the graph setting to be $2\cdot\sum_{y\in V}\Gamma_1(f)(y)$.
\end{customthm}
%%%%%%%%%%%%%%%%%%%%%%%%%%%%%%%%%%%%%%%%%%%%%%%
\begin{proof}[Proof of Theorem~\ref{thm:main-1}]
	Suppose a graph $G$ satisfies $CCD(K,N)$ condition. By Lemma~\ref{lem:cone-Gamma3},
%%%%%%%%%%%%%%%%%%%%%%%%%%%%%%%%%%%%%%%%%%%%%%%
    \small	
	\be\label{eq:cd-vertex-2}
    \Gamma_2^c(f)(p)=\sum_{y\in V}\Gamma_1(f)(y)-\frac{\lvert V\rvert-3}{4}\sum_{y\in V}f^2(y)+\frac{1}{2}\bigl (\sum_{y\in V}f(y)\bigr )^2, 
    \ee 
    \normalsize
%%%%%%%%%%%%%%%%%%%%%%%%%%%%%%%%%%%%%%%%%%%%%%%
Upon combining (\ref{eq:cd-vertex-2}) and (\ref{eq:cd-vertex-1}), we will arrive at
%%%%%%%%%%%%%%%%%%%%%%%%%%%%%%%%%%%%%%%%%%%%%%%
	\small
	\be
    \sum_{y\in V}\Gamma_1(f)(y)-\frac{\lvert V\rvert-3}{4}\sum_{y\in V}f^2(y)+\frac{1}{2}\bigl (\sum_{y\in V}f(y)\bigr )^2\geq\frac{1}{N}\bigl (\sum_{y\in V}f(y)\bigr )^2+\frac{K}{2}\sum_{y\in V}f^2(y), \notag
    \ee
    \normalsize
%%%%%%%%%%%%%%%%%%%%%%%%%%%%%%%%%%%%%%%%%%%%%%%
which simplifies to
%%%%%%%%%%%%%%%%%%%%%%%%%%%%%%%%%%%%%%%%%%%%%%%
    \small
    \be
    \sum_{y\in V}\Gamma_1(f)(y)\geq\frac{2-N}{2N}\bigl( \sum_{y\in V}f(y)\bigr)^2+\frac{2K+\lvert V\rvert-3}{4}\sum_{y\in V}f^2(y). \notag
    \ee
    \normalsize
For $f$ with $\operatorname{avg}(f) = 0$, the above reduces to
%%%%%%%%%%%%%%%%%%%%%%%%%%%%%%%%%%%%%%%%%%%%%%%
    \small
	\be
	\frac{1}{2}\sum_{y\in V}\lvert\nabla f(y)\rvert^2\geq\frac{2K+\lvert V\rvert-3}{4}\sum_{y\in V}f^2(y).
	\ee
	\normalsize	
By the definition in ~\ref{eq:gradfg} this yields the Poincar\'e inequality,	
%%%%%%%%%%%%%%%%%%%%%%%%%%%%%%%%%%%%%%%%%%%%%%%
    \small
	\be
	\lVert f \rVert_2\leq\sqrt{\frac{2}{2K+\lvert V\rvert-3}}\lVert\nabla f\rVert_2  \text{, when } \operatorname{avg}(f) =0. \notag
	\ee
	\normalsize
\end{proof}
%%%%%%%%%%%%%%%%%%%%%%%%%%%%%%%%%%%%%%%%%%%%%%%
\begin{customthm}{\ref{thm:main-2}}
For any graph, $G$, and a given $N>1$, the \emph{conical curvature} cannot exceed the following number:
%%%%%%%%%%%%%%%%%%%%%%%%%%%%%%%%%%%%%%%%%%%%%%%
\small	
	\be
	K^c_{max} = \frac{\lvert V\rvert}{2}+\frac{3}{2}-2\frac{\lvert V\rvert}{N}. \notag
	\ee
	\normalsize
\end{customthm}
\begin{proof}[Proof of Theorem~\ref{thm:main-2}]
	Suppose a finite graph $G$ satisfies the $CCD(K,N)$ and $f$ is a non-zero harmonic function, then one has $\sum_{y\in V}\Gamma_1(f)(y)=0$  (i.e. $f$ is constant on connected components). Thus,
%%%%%%%%%%%%%%%%%%%%%%%%%%%%%%%%%%%%%%%%%%%%%%%
	\small
	\begin{eqnarray*}
	\frac{2K+\lvert V\rvert-3}{4}\sum_{y\in V}f^2(y) \le \frac{N-2}{2N}\bigl ( \sum_{y\in V}f(y)\bigr)^2. \notag
	\end{eqnarray*}
	\normalsize
%%%%%%%%%%%%%%%%%%%%%%%%%%%%%%%%%%%%%%%%%%%%%%%
By the Cauchy-Schwarz inequality, 
%%%%%%%%%%%%%%%%%%%%%%%%%%%%%%%%%%%%%%%%%%%%%%%
\small
\be
\bigl ( \sum_{y\in V}f(y)\bigr )^2 \le  \lvert V\rvert\cdot\sum_{y\in V}f^2(y), \notag
\ee
\normalsize
%%%%%%%%%%%%%%%%%%%%%%%%%%%%%%%%%%%%%%%%%%%%%%%
which implies
%%%%%%%%%%%%%%%%%%%%%%%%%%%%%%%%%%%%%%%%%%%%%%%
\small
\be
\frac{2K+\lvert V\rvert-3}{4}\sum_{y\in V}f^2(y) \le \frac{N-2}{2N}\lvert V\rvert\cdot\sum_{y\in V}f^2(y) . \notag
\ee
\normalsize
%%%%%%%%%%%%%%%%%%%%%%%%%%%%%%%%%%%%%%%%%%%%%%%
Since, $f$ is not constant zero, 
%%%%%%%%%%%%%%%%%%%%%%%%%%%%%%%%%%%%%%%%%%%%%%%
\small
\be
K \le \frac{\lvert V\rvert}{2}-\frac{2\lvert V\rvert}{N}+\frac{3}{2}. \notag
\ee
\normalsize
\end{proof}
%%%%%%%%%%%%%%%%%%%%%%%%%%%%%%%%%%%%%%%%%%%%%%%
%%%%%%%%%%%%%%%%%%%%%%%%%%%%%%%%%%%%%%%%%%%%%%%
Having established an upper bound for the curvature at the cone point over the vertex set of the graph $G$ we now turn to an investigation of when the maximum curvature value is achieved.
%%%%%%%%%%%%%%%%%%%%%%%%%%%%%%%%%%%%%%%%%%%%%%%
%%%%%%%%%%%%%%%%%%%%%%%%%%%%%%%%%%%%%%%%%%%%%%%
\begin{lemma}
For any finite graph, $G$, the Ricci curvatures $\Ric_{\infty}(G)$, $\Ric_N(G)$, $CRic_{\infty}(G)$ and $CRic_N(G)$ are realized by some functions, i.e. there are functions that achieve the equality in the (corresponding) defining Bakry-\'Emery curvature-dimension inequalities. 
\end{lemma}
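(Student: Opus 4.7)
The plan is to recast each Bakry-\'Emery curvature-dimension inequality as a parametric positive-semidefiniteness (PSD) condition on a finite-dimensional vector space, and then to exploit the closedness of the PSD cone. Identifying a function on $V$ (or on the vertex set of the cone) with a vector in a finite-dimensional Euclidean space, each of $\Gamma_1(f)(y)$, $\Gamma_2(f)(y)$ and $(\Delta f)^2(y)$ is a homogeneous quadratic form in $f$, so one can write $\Gamma_i(f)(y) = f^{T} A_i(y) f$ and $(\Delta f)^2(y) = f^{T} A_\Delta(y) f$ for symmetric matrices with $A_1(y), A_\Delta(y) \succeq 0$. The pointwise $CD(K,N)$ inequality at $y$ becomes the affine-in-$K$ matrix condition
\[
    M_N(K,y) \ := \ A_2(y) - \tfrac{1}{N} A_\Delta(y) - K\, A_1(y) \ \succeq\ 0.
\]

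For each fixed $y$ the set $\mathcal{K}_N(y) := \{K \in \mathbb{R} : M_N(K,y) \succeq 0\}$ is closed, and it is bounded above because for a non-isolated vertex one can choose, e.g., $f = \mathbf{1}_{\{y\}}$, giving $A_1(y)f \neq 0$ and hence $\Gamma_1(f)(y) > 0$. It is nonempty for the following reason: whenever $\Gamma_1(f)(y) = 0$ the function $f$ is constant on the $1$-ball about $y$, which forces $\Delta f(y) = 0$ and $\Gamma_2(f)(y) \ge 0$, and a short polarization check shows the associated bilinear form vanishes to first order, so the infimum of $(\Gamma_2(f)(y) - (\Delta f)^2(y)/N)/\Gamma_1(f)(y)$ over $\{\Gamma_1(f)(y) > 0\}$ is finite. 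Hence $\Ric_N(y) = \sup \mathcal{K}_N(y)$ is attained. At $K^* := \Ric_N(y)$, the matrix $M_y := M_N(K^*, y)$ is PSD but not positive definite (otherwise a slight increase of $K$ would preserve PSD-ness, contradicting maximality), so $\ker M_y \neq \{0\}$. To guarantee a \emph{nontrivial} extremizer---not one for which both sides of the inequality simply vanish at $y$---I would rule out $\ker M_y \subseteq \ker A_1(y)$: decomposing $\mathbb{R}^{V} = \ker A_1(y) \oplus \ker A_1(y)^{\perp}$, the form $M_y$ is positive definite on the orthogonal complement, so $M_y - \epsilon A_1(y) \succeq 0$ for sufficiently small $\epsilon > 0$, again contradicting the maximality of $K^*$. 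Thus there exists $f^* \in \ker M_y$ with $A_1(y) f^* \neq 0$, the required witness.

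Finally, since $V$ is finite, $\Ric_N(G) = \min_{y \in V} \Ric_N(y)$ is attained at some vertex $y^*$, and the pointwise extremizer at $y^*$ realizes $\Ric_N(G)$. The $N = \infty$ case is identical after dropping the $A_\Delta$ term, and the conical curvatures $CRic_N(G)$, $CRic_\infty(G)$ are handled verbatim with $\Delta^c, \Gamma_1^c, \Gamma_2^c$ in place of $\Delta, \Gamma_1, \Gamma_2$ at the cone point $y = p$ (the conical operators are invariant under adding constants, so the convention $f(p) = 0$ used throughout the preceding section loses no generality). The main technical difficulty is the nontriviality step for the extremizer; the PSD-cone perturbation argument combined with the block decomposition along $\ker A_1(y)$ resolves this uniformly for all four curvature quantities.
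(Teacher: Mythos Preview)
Your argument is correct and takes a genuinely different route from the paper's. The paper proceeds by a direct sequential compactness argument: it picks near-extremizers $g_i$ witnessing the failure of $CD(\Ric_N(G)+1/i,N)$, normalizes so that $\operatorname{Range}(g_i)\subset[-1,1]$, and extracts a convergent subsequence using the finiteness of $V$; the limit function then achieves equality. Your approach instead encodes the pointwise $CD(K,N)$ inequality as the affine-in-$K$ PSD condition $M_N(K,y)\succeq 0$, uses closedness of the PSD cone to see that $\Ric_N(y)=\sup\{K:M_N(K,y)\succeq 0\}$ is attained, and then produces the extremizer from $\ker M_N(\Ric_N(y),y)$.

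Two remarks on the trade-offs. First, your linear-algebraic argument actually yields more than the paper's: you show that the extremizer $f^*$ satisfies $\Gamma_1(f^*)(y)>0$, so the realized equality is not the vacuous identity $0=0$. The paper's normalization (range in $[-1,1]$) does not by itself prevent the limit from being constant, so this nontriviality is not explicit there. Second, two of your steps are terse and deserve a line of justification if you write this out. The ``polarization check'' for nonemptiness of $\mathcal{K}_N(y)$ amounts to the following: if $f_0\in\ker A_1(y)$ then $f_0$ is constant on $B_y^1$, and if additionally $\Gamma_2(f_0)(y)=0$ then $f_0$ is constant on $B_y^2$; since $\Gamma_2(\cdot,\cdot)(y)$ and $(\Delta\cdot)(y)$ depend only on values in $B_y^2$ and are invariant under adding constants, the bilinear form $B(f_0,g)=\Gamma_2(f_0,g)(y)-\tfrac{1}{N}\Delta f_0(y)\Delta g(y)$ vanishes for all $g$, which is exactly what is needed. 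And in the step ruling out $\ker M_y\subseteq\ker A_1(y)$, the implication ``$M_y$ positive definite on $(\ker A_1(y))^\perp$ hence $M_y-\epsilon A_1(y)\succeq 0$'' requires handling the off-diagonal block; this follows cleanly once you note that $\ker M_y\subseteq\ker A_1(y)$ together with $A_1(y)\succeq 0$ forces $A_1(y)(f)=A_1(y)(f_1)$ for the component $f_1\in(\ker M_y)^\perp$, whence $M_y(f)=M_y(f_1)\ge\lambda\|f_1\|^2\ge(\lambda/\|A_1(y)\|)A_1(y)(f)$.
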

%%%%%%%%%%%%%%%%%%%%%%%%%%%%%%%%%%%%%%%%%%%%%%%
%%%%%%%%%%%%%%%%%%%%%%%%%%%%%%%%%%%%%%%%%%%%%%%
\begin{proof}
	We will just present a proof for $\Ric_N(G)$. The proof for other Ricci curvatures are similar. 
%%%%%%%%%%%%%%%%%%%%%%%%%%%%%%%%%%%%%%%%%%%%%%%
\par Since, $\Ric_N(G)$ is the supremum of all possible lower curvature bounds, one can find a sequence, $g_i$ such that for all $v \in V$
%%%%%%%%%%%%%%%%%%%%%%%%%%%%%%%%%%%%%%%%%%%%%%%
\small
\be\label{eq:max-Ric}
	\frac{1}{N} \bigl( \Delta g_i \bigr)^2(v)+ \Ric_N(G)\Gamma_1 \bigl( g_i \bigr)(v) \le \Gamma_2 \bigl( g_i \bigr)(v) <  \frac{1}{N}\bigl( \Delta g_i \bigr)^2(v)+\bigl (\Ric_N(G)+\frac{1}{i}\bigr)\Gamma_1\bigl( g_i \bigr)(v). 
\ee	
\normalsize
%%%%%%%%%%%%%%%%%%%%%%%%%%%%%%%%%%%%%%%%%%%%%%%
%%%%%%%%%%%%%%%%%%%%%%%%%%%%%%%%%%%%%%%%%%%%%%%
All the terms appearing in the above inequality are invariant under rescaling of the $g_i$'s. Hence, without loss of generality, we may assume that  $\operatorname{Range}(g_i) \subset [-1, 1]$, for all $i$. Now since $V(G)$ is finite then by a diagonal argument one can find a subsequence $g_j$ of the $g_i$'s that converge to a function $g$. Taking the limit of~(\ref{eq:max-Ric}) as $j \to \infty$ shows that $g$ achieves $\Ric_N(G)$.
\end{proof}
%%%%%%%%%%%%%%%%%%%%%%%%%%%%%%%%%%%%%%%%%%%%%%%
%%%%%%%%%%%%%%%%%%%%%%%%%%%%%%%%%%%%%%%%%%%%%%%
%%%%%%%%%%%%%%%%%%%%%%%%%%%%%%%%%%%%%%%%%%%%%%%
%%%%%%%%%%%%%%%%%%%%%%%%%%%%%%%%%%%%%%%%%%%%%%%
%%%%%%%%%%%%%%%%%%%%%%%%%%%%%%%%%%%%%%%%%%%%%%%
%%%%%%%%%%%%%%%%%%%%%%%%%%%%%%%%%%%%%%%%%%%%%%%
% Section: Functions That Maximize the Conical Curvature
%%%%%%%%%%%%%%%%%%%%%%%%%%%%%%%%%%%%%%%%%%%%%%%
%%%%%%%%%%%%%%%%%%%%%%%%%%%%%%%%%%%%%%%%%%%%%%%
%%%%%%%%%%%%%%%%%%%%%%%%%%%%%%%%%%%%%%%%%%%%%%%
%%%%%%%%%%%%%%%%%%%%%%%%%%%%%%%%%%%%%%%%%%%%%%%
%%%%%%%%%%%%%%%%%%%%%%%%%%%%%%%%%%%%%%%%%%%%%%%
%%%%%%%%%%%%%%%%%%%%%%%%%%%%%%%%%%%%%%%%%%%%%%%
\section{Functions That Maximize the Conical Curvature}\label{sec:max}
In this section we show that 
\begin{customthm}{\ref{thm:main-3}}
	Suppose $G$ satisfies $CCD(K^c_{max},N)$. Then any function, $f$, realizes $K^c_{max}$ if and only if $f$ is either constant or $f-\operatorname{avg}(f)$ is an eigenfunction corresponding to $\lambda_1(G)=\frac{N-2}{4N}\lvert V\rvert$. Furthermore, when $G$ is a complete graph, $f$ must be constant (harmonic).
\end{customthm}

%%%%%%%%%%%%%%%%%%%%%%%%%%%%%%%%%%%%%%%%%%%%%%%
%%%%%%%%%%%%%%%%%%%%%%%%%%%%%%%%%%%%%%%%%%%%%%%
\begin{proof}
	Suppose $G$ satisfies $CCD(K^c_{max},N)$, then for any $f$
%%%%%%%%%%%%%%%%%%%%%%%%%%%%%%%%%%%%%%%%%%%%%%%
	\small
	\be
	\sum_{y\in V}\Gamma_1(f)(y)\geq\frac{2-N}{2N}\bigl( \sum_{y\in V}f(y)\bigr)^2+\frac{2K^c_{max}+\lvert V\rvert-3}{4}\sum_{y\in V}f^2(y). \notag
	\ee
	\normalsize
%%%%%%%%%%%%%%%%%%%%%%%%%%%%%%%%%%%%%%%%%%%%%%%	
	Since $K^c_{max} = \frac{\lvert V\rvert}{2}+\frac{3}{2}-2\frac{\lvert V\rvert}{N}=\frac{N\cdot\lvert V\rvert+3N-4\lvert V\rvert}{2N}$, this simplifies to
%%%%%%%%%%%%%%%%%%%%%%%%%%%%%%%%%%%%%%%%%%%%%%%	
\small
	\begin{eqnarray*}
	\sum_{y\in V}\Gamma_1(f)(y) &\geq&\frac{2-N}{2N}\bigl ( \sum_{y\in V}f(y)\bigr )^2+\frac{N\cdot\lvert V\rvert+3N-4\lvert V\rvert+N\cdot\lvert V\rvert-3N}{4N}\sum_{y\in V}f^2(y)\\
	&\geq&\frac{2-N}{2N}\bigl ( \sum_{y\in V}f(y)\bigr )^2+\frac{N\cdot\lvert V\rvert-2\lvert V\rvert}{2N}\sum_{y\in V}f^2(y)\\
	&\geq&\frac{2-N}{2N}\bigl ( \sum_{y\in V}f(y)\bigr )^2+\frac{N-2}{2N}\cdot\lvert V\rvert\sum_{y\in V}f^2(y)\\
	&\geq&\frac{N-2}{2N}\biggl[ \lvert V\rvert\sum_{y\in V}f^2(y)-\bigl ( \sum_{y\in V}f(y)\bigr )^2\biggr ] . 
	\end{eqnarray*}
	\normalsize
%%%%%%%%%%%%%%%%%%%%%%%%%%%%%%%%%%%%%%%%%%%%%%%
	Take $\phi:V\to\mathbb{R}$ to be any variational function on the vertex set of $G$ and let $t \in \R$, then 
%%%%%%%%%%%%%%%%%%%%%%%%%%%%%%%%%%%%%%%%%%%%%%%
\small
\be
\sum_{y\in V}\Gamma_1 \bigl( f+t\phi \bigr)(y)\geq\frac{N-2}{2N}\biggl [ \lvert V\rvert\sum_{y\in V}(f+t\phi)^2(y)-\bigl ( \sum_{y\in V} (f+t\phi)(y)\bigr )^2\biggr ]. \notag
\ee	
\normalsize
%%%%%%%%%%%%%%%%%%%%%%%%%%%%%%%%%%%%%%%%%%%%%%%
Suppose now that $f$ achieves $K^c_{max}$, then for any $\phi$ the above inequality becomes an equality (i.e. $\frac{d}{dt}|_{t=0}$ of both sides must be equal for any variation $\phi$). Hence a straightforward calculation yields the linearized equation,
%%%%%%%%%%%%%%%%%%%%%%%%%%%%%%%%%%%%%%%%%%%%%%%
\small
\be\label{eq:linearization-1}
	\sum_{y\in V}\sum_{z\sim y} \bigl( f(z)-f(y) \bigr) \bigl( \phi(z)-\phi(y) \bigr) = \frac{N-2}{2N}\biggl [\lvert V\rvert\sum_{y\in V}f(y)\phi(y)-\sum_{y\in V}f(y)\sum_{y\in V}\phi(y)\biggr ]. 
\ee
\normalsize
%%%%%%%%%%%%%%%%%%%%%%%%%%%%%%%%%%%%%%%%%%%%%%%
Now fix $r\in V$ and let $\phi(y) = \delta_r(y)$. Notice that $\sum_{z\sim y} \bigl( f(z)-f(y) \bigr) \bigl( \delta_r(z)-\delta_r(y) \bigr)$ is zero except when $y=r$ or $y\sim r$. If $y=r$ the result is $-\sum_{z\sim r} \bigl( f(z)-f(r) \bigr)$. When $y\sim r$ there is exactly one non-zero term in the sum, $\bigl( f(r)-f(y) \bigr)$ and summing over all $y\sim r$ we get $\sum_{y\sim r}\bigl( f(r)-f(y) \bigr)$. Thus 
%%%%%%%%%%%%%%%%%%%%%%%%%%%%%%%%%%%%%%%%%%%%%%%
\small
\be\label{eq:linearization-2}
\sum_{y\in V}\sum_{z\sim y}\bigl( f(z)-f(y) \bigr) \bigl( \delta_r(z)- \delta_r(y) \bigr)=-2\Delta f(r), 
\ee
\normalsize
%%%%%%%%%%%%%%%%%%%%%%%%%%%%%%%%%%%%%%%%%%%%%%%
and (\ref{eq:linearization-1}) reduces to
%%%%%%%%%%%%%%%%%%%%%%%%%%%%%%%%%%%%%%%%%%%%%%%
\small
\be
	-2\Delta f (r)=\frac{N-2}{2N} \biggl [ \lvert V\rvert f(r)- \sum_{y\in V}f(y) \biggr ] , \label{eq:linearization-3}
\ee
\normalsize
%%%%%%%%%%%%%%%%%%%%%%%%%%%%%%%%%%%%%%%%%%%%%%%
which is equivalent to
%%%%%%%%%%%%%%%%%%%%%%%%%%%%%%%%%%%%%%%%%%%%%%%
\small
\be\label{eq:linearization-4}
\Delta f (r) = \frac{N-2}{4N} \bar{\Delta} f (r), 
\ee
\normalsize
%%%%%%%%%%%%%%%%%%%%%%%%%%%%%%%%%%%%%%%%%%%%%%%
where $\bar{\Delta}$ denotes the Laplacian for the graph completion, $\bar{G}$, of $G$. This right away implies that $\Delta f (r) =0$ and we are done. Furthermore when $G$ is a complete graph, $f$ is harmonic on $G$.
%%%%%%%%%%%%%%%%%%%%%%%%%%%%%%%%%%%%%%%%%%%%%%%
\par Now suppose $G$ is arbitrary. By equation (\ref{eq:linearization-3})
%%%%%%%%%%%%%%%%%%%%%%%%%%%%%%%%%%%%%%%%%%%%%%%
\small
\be	
\Delta \bigl(  f - \operatorname{avg}(f) \bigr)(r) = - \frac{N-2}{4N} |V| \bigl(  f - \operatorname{avg}(f) \bigr)(r). \notag
\ee
\normalsize
%%%%%%%%%%%%%%%%%%%%%%%%%%%%%%%%%%%%%%%%%%%%%%%
Now if $f$ is not constant then by the Rayleigh quotient, (\ref{eq:Rayleigh}), we see that $\lambda_1 (G) = \frac{N-2}{4N} |V| $ and $f - \operatorname{avg}(f)$ is an eigenfunction for $\lambda_1$.\\
%%%%%%%%%%%%%%%%%%%%%%%%%%%%%%%%%%%%%%%%%%%%%%%
%%%%%%%%%%%%%%%%%%%%%%%%%%%%%%%%%%%%%%%%%%%%%%%
For the "if" direction suppose for some non-constant function, $f$, that $f - \operatorname{avg}(f)$ is an eigenfunction for $\lambda_1 = \frac{N-2}{4N} |V| $. Tracing back the above computations one has (\ref{eq:linearization-1}) holds for $\phi = \delta_y$'s. Then since (\ref{eq:linearization-1}) is linear in $\phi$, one can use $f = \sum_{y\in V} f(y)\delta_y$ instead of $\phi$ which will translate to $f$ realizing $K^c_{max}$.
\end{proof}  
%%%%%%%%%%%%%%%%%%%%%%%%%%%%%%%%%%%%%%%%%%%%%%%
%%%%%%%%%%%%%%%%%%%%%%%%%%%%%%%%%%%%%%%%%%%%%%%
%%%%%%%%%%%%%%%%%%%%%%%%%%%%%%%%%%%%%%%%%%%%%%%
%%%%%%%%%%%%%%%%%%%%%%%%%%%%%%%%%%%%%%%%%%%%%%%
%%%%%%%%%%%%%%%%%%%%%%%%%%%%%%%%%%%%%%%%%%%%%%%
%%%%%%%%%%%%%%%%%%%%%%%%%%%%%%%%%%%%%%%%%%%%%%%
% Section:  Lower Bounds for the First Eigenvalue
%%%%%%%%%%%%%%%%%%%%%%%%%%%%%%%%%%%%%%%%%%%%%%%
%%%%%%%%%%%%%%%%%%%%%%%%%%%%%%%%%%%%%%%%%%%%%%%
%%%%%%%%%%%%%%%%%%%%%%%%%%%%%%%%%%%%%%%%%%%%%%%
%%%%%%%%%%%%%%%%%%%%%%%%%%%%%%%%%%%%%%%%%%%%%%%
%%%%%%%%%%%%%%%%%%%%%%%%%%%%%%%%%%%%%%%%%%%%%%%
%%%%%%%%%%%%%%%%%%%%%%%%%%%%%%%%%%%%%%%%%%%%%%%
\section{$CCD(K,N)$ and Lower Bounds on $\lambda_1(G)$}\label{sec:lambda1}
In this section we assume that a given graph, $G$, satisfies the $CCD(K,N)$ condition. We will use the resulting global Poincar\'e inequality along with the results of the last section to find lower bounds on the first non-zero eigenvalues of such graphs. 
%%%%%%%%%%%%%%%%%%%%%%%%%%%%%%%%%%%%%%%%%%%%%%%
%%%%%%%%%%%%%%%%%%%%%%%%%%%%%%%%%%%%%%%%%%%%%%%
\begin{lemma}
Suppose $G$ satisfies $CCD(K,N)$ and $N\geq2$. Then Cheeger's isoperimetric constant, $h(G)$, satisfies
%%%%%%%%%%%%%%%%%%%%%%%%%%%%%%%%%%%%%%%%%%%%%%%
\small
\be
	h (G)  \ge  \frac{2\lvert V\rvert + 4NK +N\lvert V\rvert - 6N}{8N},\notag
\ee
\normalsize
%%%%%%%%%%%%%%%%%%%%%%%%%%%%%%%%%%%%%%%%%%%%%%%
and,
%%%%%%%%%%%%%%%%%%%%%%%%%%%%%%%%%%%%%%%%%%%%%%%
\small
\be
	\lambda_1(G) \ge \frac{\bigl(2\lvert V\rvert + 4NK +N\lvert V\rvert - 6N \bigr)^2}{128N^2 d_{max}}. \notag
\ee
\normalsize
\end{lemma}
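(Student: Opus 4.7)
The plan is to apply the global Poincar\'e inequality of Theorem~\ref{thm:main-1} to the indicator of a vertex subset, extract a Cheeger-type isoperimetric estimate, and then convert it to a spectral gap estimate via the Dodziuk--Alon--Milman inequality (\ref{eq:DAM}).

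First I would test (\ref{eq:poincare}) on $f=\chi_F$ for a nonempty $F\subsetneq V$. A direct computation gives $\sum_{y\in V}f(y)=\sum_{y\in V}f^{2}(y)=|F|$, while $\sum_{y\in V}\Gamma_1(f)(y)=|\partial F|$; the last identity holds because $\Gamma_1(\chi_F)(y)$ equals half the number of edges from $y$ that cross $\partial F$, and summing over $y$ double-counts each boundary edge. Substituting these into (\ref{eq:poincare}) yields the isoperimetric inequality
\[
|\partial F|\ \geq\ \frac{2-N}{2N}|F|^{2}+\frac{2K+|V|-3}{4}|F|.
\]

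Next, since $|\partial F|=|\partial(V\setminus F)|$, I may assume without loss of generality that $|F|\leq |V|/2$, so that $\min(|F|,|V\setminus F|)=|F|$. Dividing the previous inequality by $|F|$ and invoking the hypothesis $N\geq 2$, which forces the coefficient $\tfrac{2-N}{2N}$ to be nonpositive, I would upper bound $|F|$ by $|V|/2$ in the first term. This produces a uniform lower bound on $|\partial F|/|F|$ valid for every admissible $F$; placing everything over the common denominator $8N$ and simplifying yields the stated lower bound on $h(G)$.

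The spectral gap estimate then follows immediately from the right-hand inequality in (\ref{eq:DAM}), namely $h(G)\leq\sqrt{2d_{max}\lambda_1(G)}$, which rearranges to $\lambda_1(G)\geq h(G)^{2}/(2d_{max})$. Squaring the lower bound on $h(G)$ and dividing by $2d_{max}$ produces the claimed quadratic lower bound on $\lambda_1(G)$; note in particular that the denominator $128N^{2}d_{max}$ is $2d_{max}\cdot(8N)^{2}$, consistent with this route.

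I expect the main subtlety to be the algebraic bookkeeping in the simplification: the hypothesis $N\geq 2$ is precisely what makes the quadratic-in-$|F|$ term in the isoperimetric inequality work \emph{with} rather than \emph{against} the Cheeger minimization, so that the extremal substitution $|F|=|V|/2$ still yields a useful bound. Beyond this, both steps are essentially mechanical consequences of Theorem~\ref{thm:main-1} and (\ref{eq:DAM}).
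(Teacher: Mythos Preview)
Your approach is correct and is exactly the one the paper takes: plug $f=\chi_F$ into the global Poincar\'e inequality (\ref{eq:poincare}), divide by $|F|$, use $N\ge 2$ to replace $|F|$ by $|V|/2$ in the nonpositive term, take the infimum to bound $h(G)$, and then square via the right-hand inequality in (\ref{eq:DAM}).

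One caution about the final algebra. With $\sum_{y}\Gamma_1(\chi_F)(y)=|\partial F|$ and the coefficient $\tfrac{2K+|V|-3}{4}$ from (\ref{eq:poincare}), the computation actually gives
\[
h(G)\ \ge\ \frac{(2-N)|V|}{4N}+\frac{2K+|V|-3}{4}\ =\ \frac{2|V|+2NK-3N}{4N},
\]
which for $N>2$ is slightly \emph{weaker} than the constant displayed in the lemma. The paper's own proof arrives at the displayed constant only because it misquotes (\ref{eq:poincare}) with $\tfrac{2K+|V|-3}{2}$ in place of $\tfrac{2K+|V|-3}{4}$ (and correspondingly writes $2|\partial F|$ on the left). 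So your route is the right one, but do not expect the simplification to land exactly on the stated numerator $2|V|+4NK+N|V|-6N$; the discrepancy is an arithmetic slip in the paper, not in your argument.
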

%%%%%%%%%%%%%%%%%%%%%%%%%%%%%%%%%%%%%%%%%%%%%%%
%%%%%%%%%%%%%%%%%%%%%%%%%%%%%%%%%%%%%%%%%%%%%%%
\begin{proof}
Since $G$ satisfies the $CCD(K,N)$ condition for any $f$, we have the global Poincar\'e inequality from Theorem~\ref{thm:main-1}, 
%%%%%%%%%%%%%%%%%%%%%%%%%%%%%%%%%%%%%%%%%%%%%%%
		\small
		\be
		\sum_{y\in V}\Gamma_1(f)(y)\geq\frac{2-N}{2N}\biggl ( \sum_{y\in V}f(y)\biggr )^2+\frac{2K+\lvert V\rvert-3}{2}\sum_{y\in V}f^2(y). \notag
		\ee
		\normalsize
%%%%%%%%%%%%%%%%%%%%%%%%%%%%%%%%%%%%%%%%%%%%%%%
Suppose $F\subset V$ and $|F| \le \frac{|V|}{2}$. Let $f = \chi_F$ be the characteristic function of $F$, then (~\ref{eq:poincare}) becomes
%%%%%%%%%%%%%%%%%%%%%%%%%%%%%%%%%%%%%%%%%%%%%%%
\small
\be
	 \frac{2-N}{2N} |F|^2+\frac{2K+\lvert V\rvert-3}{2} |F|  \le 2|\partial F| . \notag
\ee
\normalsize
%%%%%%%%%%%%%%%%%%%%%%%%%%%%%%%%%%%%%%%%%%%%%%%
Hence, when $N\geq2$, 
%%%%%%%%%%%%%%%%%%%%%%%%%%%%%%%%%%%%%%%%%%%%%%%
\small
\be
	\frac{|\partial F|}{|F|} \ge \frac{2-N}{4N} |F|+\frac{2K+\lvert V\rvert-3}{4} \ge \frac{2-N}{4N} \frac{|V|}{2} + \frac{2K+\lvert V\rvert-3}{4} = \frac{2\lvert V\rvert + 4NK +N\lvert V\rvert - 6N}{8N}.  \notag
\ee
\normalsize
%%%%%%%%%%%%%%%%%%%%%%%%%%%%%%%%%%%%%%%%%%%%%%%
Now applying Cheeger's inequality (~\ref{eq:DAM}) (see~\cite{Chung-2} and~\cite{ASS}), we know that $\lambda_1(G) \ge \frac{h^2(G)}{2d_{max}}$, where $d_{max}$ is the maximum degree in the graph, $G$. Hence, 
%%%%%%%%%%%%%%%%%%%%%%%%%%%%%%%%%%%%%%%%%%%%%%%
\small
\be
	\lambda_1 \ge \frac{h^2(G)}{2d_{max}} \ge \frac{\bigl(2\lvert V\rvert + 4NK +N\lvert V\rvert - 6N \bigr)^2}{128N^2 d_{max}}. \notag
\ee
\normalsize
\end{proof}
%%%%%%%%%%%%%%%%%%%%%%%%%%%%%%%%%%%%%%%%%%%%%%%
%%%%%%%%%%%%%%%%%%%%%%%%%%%%%%%%%%%%%%%%%%%%%%%
In the rest of this section we show that any lower bound, $\lambda$, for $\lambda_1(G)$ will imply that $G$ satisfies $CCD(K,N)$ for some $K$ and $N$ (depending on $\lambda$). 
%%%%%%%%%%%%%%%%%%%%%%%%%%%%%%%%%%%%%%%%%%%%%%%
%%%%%%%%%%%%%%%%%%%%%%%%%%%%%%%%%%%%%%%%%%%%%%%
\begin{thm}
Suppose $\lambda_1(G) \ge \lambda$, then $G$ satisfies $CCD(K,N)$ for any $K$ and $N$ with
%%%%%%%%%%%%%%%%%%%%%%%%%%%%%%%%%%%%%%%%%%%%%%%
\small
\be 
N \ge \frac{2\lvert V\rvert}{|V|- \lambda},  \quad  \text{and} \quad  K \ge \frac{\lambda - |V| + 3}{2}.  \notag 
\ee
\normalsize
%%%%%%%%%%%%%%%%%%%%%%%%%%%%%%%%%%%%%%%%%%%%%%%
\end{thm}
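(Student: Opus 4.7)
The plan is to reduce the $CCD(K,N)$ condition at the cone point $p$ to a single integral inequality on $G$, then verify that inequality from the spectral gap hypothesis via the Rayleigh quotient. Since $\Delta^c$, $\Gamma_1^c$, and $\Gamma_2^c$ are insensitive to additive constants, I may assume $f(p)=0$. Then Lemmas~\ref{lem:cone-Delta}, \ref{lem:cone-Gamma1}, and \ref{lem:cone-Gamma3} give $\Delta^c f(p)=\sum_y f(y)$, $\Gamma_1^c(f)(p)=\tfrac12\sum_y f^2(y)$, and the closed form for $\Gamma_2^c(f)(p)$. Substituting into the defining inequality~\eqref{eq:ccd} at $p$ produces exactly the global Poincar\'e-type inequality~\eqref{eq:poincare}. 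Thus the theorem reduces to establishing~\eqref{eq:poincare} under the assumption $\lambda_1(G)\ge \lambda$.

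Next, I would decompose $f = \bar f + g$ with $\bar f := \operatorname{avg}(f)$, so that $\operatorname{avg}(g) = 0$. The identities $\sum_y f = |V|\bar f$, $\sum_y f^2 = |V|\bar f^2 + \sum_y g^2$, and $\sum_y \Gamma_1(f) = \sum_y \Gamma_1(g)$ (the gradient forgets constants) rewrite~\eqref{eq:poincare} as
\begin{equation*}
\sum_{y\in V}\Gamma_1(g)(y) \;\ge\; A\cdot |V|\bar f^2 \;+\; B\cdot\sum_{y\in V} g^2(y),
\end{equation*}
where $A = A(K,N,|V|)$ collects the $\bar f^2$ coefficients and $B = \tfrac{2K+|V|-3}{4}$. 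The spectral hypothesis now enters: the Rayleigh characterization~\eqref{eq:Rayleigh} together with the divergence identity~\eqref{eq:divergence} give
\begin{equation*}
\sum_{y\in V}\Gamma_1(g)(y) \;\ge\; \tfrac{\lambda_1}{2}\sum_{y\in V} g^2(y)\;\ge\; \tfrac{\lambda}{2}\sum_{y\in V} g^2(y),
\end{equation*}
so it suffices to verify $\tfrac{\lambda}{2}\ge B$ and $A\le 0$.

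These two conditions unpack directly to the stated numerical thresholds on $K$ and $N$. The $g$-coefficient condition $\tfrac{\lambda}{2}\ge \tfrac{2K+|V|-3}{4}$ gives the bound involving $\lambda - |V| + 3$, and after simplifying $A$ and setting it $\le 0$, one obtains $N\ge \tfrac{2|V|}{|V|-\lambda}$ (one should assume $|V|>\lambda$ so the division preserves the inequality direction). The main obstacle is really just bookkeeping of constants; as a sanity check, at the boundary values the right-hand side of~\eqref{eq:poincare} collapses to $\tfrac{\lambda}{2}\sum_y (f-\bar f)^2 = \tfrac{\lambda}{2}\sum_y g^2(y)$, which matches the Rayleigh lower bound exactly, showing the thresholds are sharp for this approach and mirroring the equality case of Corollary~\ref{cor:main-1}.
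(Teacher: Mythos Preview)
Your approach is essentially the paper's: both reduce $CCD(K,N)$ at $p$ to the Poincar\'e-type inequality~\eqref{eq:poincare}, derive the Rayleigh bound, and compare coefficients. The paper expands $\sum_y(f-\operatorname{avg}(f))^2$ directly to obtain
\[
\sum_{y\in V}\Gamma_1(f)(y)\;\ge\;\frac{\lambda}{2}\sum_{y\in V} f^2(y)\;-\;\frac{\lambda}{2|V|}\Bigl(\sum_{y\in V} f(y)\Bigr)^2,
\]
and then compares termwise against the $\sum f^2$ and $(\sum f)^2$ coefficients in~\eqref{eq:poincare}; your orthogonal decomposition $f=\bar f+g$ is an equivalent repackaging.

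Two remarks on the final unpacking. First, your condition $A\le 0$ is not a condition on $N$ alone: since $A=\tfrac{2-N}{2N}|V|+B$ with $B=\tfrac{2K+|V|-3}{4}$, it couples $K$ and $N$. The paper's termwise comparison on the $\sum f^2$ and $(\sum f)^2$ coefficients decouples them, yielding $\tfrac{N-2}{2N}\ge\tfrac{\lambda}{2|V|}$ (hence $N\ge\tfrac{2|V|}{|V|-\lambda}$) independently of $K$; this is why the paper can state separate thresholds. Second, your condition $\tfrac{\lambda}{2}\ge B$ rearranges to $K\le \lambda-\tfrac{|V|-3}{2}=\tfrac{2\lambda-|V|+3}{2}$, not the printed $K\ge\tfrac{\lambda-|V|+3}{2}$; the inequality direction for $K$ in the theorem as stated is inconsistent with the argument (larger $K$ makes $CCD(K,N)$ harder, not easier), and the paper's own proof carries the same slip. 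Your derivation is correct, it just does not literally land on the printed threshold.
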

%%%%%%%%%%%%%%%%%%%%%%%%%%%%%%%%%%%%%%%%%%%%%%%
%%%%%%%%%%%%%%%%%%%%%%%%%%%%%%%%%%%%%%%%%%%%%%%
\begin{proof}
Since $\lambda_1(G) \ge \lambda$ then by the Rayleigh quotient, (\ref{eq:Rayleigh}), we get
%%%%%%%%%%%%%%%%%%%%%%%%%%%%%%%%%%%%%%%%%%%%%%%
\small
\begin{eqnarray}\label{eq:lambda-to-poincare}
	 \sum_{y \in V} \Gamma_1 (f)(y) &\ge& \frac{\lambda}{2} \sum_{y \in V} \bigl( f - \operatorname{avg}(f)  \bigr)^2(y) = \frac{\lambda}{2} \biggl[ \sum_{y \in V} f^2(y)  +  |V|  \operatorname{avg}(f)^2 - 2  \operatorname{avg}(f) \sum_{y \in V} f(y)   \biggr] \notag \\ & = &  \frac{\lambda}{2} \sum_{y \in V} f^2(y) + \frac{\lambda}{2|V|} \bigl( \sum_{y \in V} f(y)  \bigr)^2 - \frac{\lambda}{|V|} \bigl( \sum_{y \in V} f(y)  \bigr)^2 \\ &=& \frac{\lambda}{2} \sum_{y \in V} f^2(y) - \frac{\lambda}{2|V|} \bigl( \sum_{y \in V} f(y)  \bigr)^2. \notag
\end{eqnarray}
\normalsize
%%%%%%%%%%%%%%%%%%%%%%%%%%%%%%%%%%%%%%%%%%%%%%%
Comparing (\ref{eq:lambda-to-poincare}) to the global Poincar\'e inequality, (\ref{eq:poincare}) due to the $CCD(K,N)$ condition, and one observes that $G$ satisfies $CCD(K,N)$ for any $K$, and $N$ where
%%%%%%%%%%%%%%%%%%%%%%%%%%%%%%%%%%%%%%%%%%%%%%%
\small
\be 
\frac{ \lambda }{|V|} \le\frac{ N - 2}{N},   \quad  \text{and} \quad  \lambda\le 2K+\lvert V\rvert-3 .  \notag 
\ee
\normalsize
%%%%%%%%%%%%%%%%%%%%%%%%%%%%%%%%%%%%%%%%%%%%%%%
The conclusion follows by noticing that one always have $\lambda_1(G) \le |V| $. 
\end{proof}
%%%%%%%%%%%%%%%%%%%%%%%%%%%%%%%%%%%%%%%%%%%%%%%
%%%%%%%%%%%%%%%%%%%%%%%%%%%%%%%%%%%%%%%%%%%%%%%
% Bibliography
\begin{bibdiv}
\begin{biblist}

\bib{Chung-2}{article}{
   author={Chung, Fan},
   title={Four proofs for the Cheeger inequality and graph partition
   algorithms},
   conference={
      title={Fourth International Congress of Chinese Mathematicians},
   },
   book={
      series={AMS/IP Stud. Adv. Math.},
      volume={48},
      publisher={Amer. Math. Soc., Providence, RI},
   },
   date={2010},
   pages={331--349},
   review={\MR{2744229}},
}

\bib{ASS}{article}{
   author={Alon, Noga},
   author={Schwartz, Oded},
   author={Shapira, Asaf},
   title={An elementary construction of constant-degree expanders},
   journal={Combin. Probab. Comput.},
   volume={17},
   date={2008},
   number={3},
   pages={319--327},
   issn={0963-5483},
   review={\MR{2410389}},
   doi={10.1017/S0963548307008851},
}

\bib{Alon-Milman}{article}{
   author={Alon, N.},
   author={Milman, V. D.},
   title={$\lambda_1,$ isoperimetric inequalities for graphs, and
   superconcentrators},
   journal={J. Combin. Theory Ser. B},
   volume={38},
   date={1985},
   number={1},
   pages={73--88},
   issn={0095-8956},
   review={\MR{782626}},
   doi={10.1016/0095-8956(85)90092-9},
}

\bib{CLY}{article}{
   author={Chung, Fan},
   author={Lin, Yong},
   author={Yau, S.-T.},
   title={Harnack inequalities for graphs with non-negative Ricci curvature},
   journal={J. Math. Anal. Appl.},
   volume={415},
   date={2014},
   number={1},
   pages={25--32},
   issn={0022-247X},
   review={\MR{3173151}},
   doi={10.1016/j.jmaa.2014.01.044},
}

\bib{Dodziuk}{article}{
   author={Dodziuk, Jozef},
   title={Difference equations, isoperimetric inequality and transience of
   certain random walks},
   journal={Trans. Amer. Math. Soc.},
   volume={284},
   date={1984},
   number={2},
   pages={787--794},
   issn={0002-9947},
   review={\MR{743744}},
   doi={10.2307/1999107},
}
	
\bib{EKS}{article}{
   author={Matthias Erbar},
   author={Kazumasa Kuwada},
   author={Karl-Theodor Sturm},
   title={On the Equivalence of the Entropic Curvature-Dimension Condition and Bochner's Inequality on Metric Measure Spaces},
   journal={ArXiv e-Print},
   date={2013},
   note={\texttt{arXiv:1303.4382v2 [math.DG]}},
}

\bib{Ketterer}{article}{
   author={Ketterer, Christian},
   title={Cones over metric measure spaces and the maximal diameter theorem},
   language={English, with English and French summaries},
   journal={J. Math. Pures Appl. (9)},
   volume={103},
   date={2015},
   number={5},
   pages={1228--1275},
   issn={0021-7824},
   review={\MR{3333056}},
   doi={10.1016/j.matpur.2014.10.011},
}

\bib{KGPP}{article}{
   author={Klartag, Bo'az},
   author={Kozma, Gady},
   author={Ralli, Peter},
   author={Tetali, Prasad},
   title={Discrete curvature and abelian groups},
   journal={Canad. J. Math.},
   volume={68},
   date={2016},
   number={3},
   pages={655--674},
   issn={0008-414X},
   review={\MR{3492631}},
   doi={10.4153/CJM-2015-046-8},
}

\bib{LLY}{article}{
   author={Lin, Yong},
   author={Lu, Linyuan},
   author={Yau, Shing-Tung},
   title={Ricci curvature of graphs},
   journal={Tohoku Math. J. (2)},
   volume={63},
   date={2011},
   number={4},
   pages={605--627},
   issn={0040-8735},
   review={\MR{2872958}},
   doi={10.2748/tmj/1325886283},
}

\bib{LY}{article}{
   author={Lin, Yong},
   author={Yau, Shing-Tung},
   title={Ricci curvature and eigenvalue estimate on locally finite graphs},
   journal={Math. Res. Lett.},
   volume={17},
   date={2010},
   number={2},
   pages={343--356},
   issn={1073-2780},
   review={\MR{2644381}},
   doi={10.4310/MRL.2010.v17.n2.a13},
}

\bib{LV}{article}{
   author={Lott, John},
   author={Villani, C\'edric},
   title={Ricci curvature for metric-measure spaces via optimal transport},
   journal={Ann. of Math. (2)},
   volume={169},
   date={2009},
   number={3},
   pages={903--991},
   issn={0003-486X},
   review={\MR{2480619}},
   doi={10.4007/annals.2009.169.903},
}

\bib{LV2}{article}{
   author={Lott, John},
   author={Villani, C\'edric},
   title={Weak curvature conditions and functional inequalities},
   journal={J. Funct. Anal.},
   volume={245},
   date={2007},
   number={1},
   pages={311--333},
   issn={0022-1236},
   review={\MR{2311627}},
   doi={10.1016/j.jfa.2006.10.018},
}

\bib{Stmms1}{article}{
   author={Sturm, Karl-Theodor},
   title={On the geometry of metric measure spaces. I},
   journal={Acta Math.},
   volume={196},
   date={2006},
   number={1},
   pages={65--131},
   issn={0001-5962},
   review={\MR{2237206}},
   doi={10.1007/s11511-006-0002-8},
}

\bib{Stmms2}{article}{
   author={Sturm, Karl-Theodor},
   title={On the geometry of metric measure spaces. I},
   journal={Acta Math.},
   volume={196},
   date={2006},
   number={1},
   pages={65--131},
   issn={0001-5962},
   review={\MR{2237206}},
   doi={10.1007/s11511-006-0002-8},
}

\end{biblist}
\end{bibdiv}
\setlength{\parskip}{0mm}

\end{document}